\documentclass[11pt,reqno]{amsart}
\usepackage{amsmath,amssymb,amsthm,enumerate, hyperref}
\usepackage{bm, soul, color}
\usepackage{amsfonts, mathrsfs}
\usepackage{comment}
\usepackage{verbatim}
\usepackage[margin=1in]{geometry}

\usepackage[normalem]{ulem}

\usepackage[pdftex]{graphicx}

\newcommand{\R}{\mathbb{R}}

\newcommand{\Z}{\mathbb{Z}}
\newcommand{\N}{\mathbb{N}}

\newcommand{\mH}{\mathcal{H}}
\newcommand{\mL}{\mathcal{L}}

\newcommand{\norm}[1]{\left\lVert#1\right\rVert}
\newcommand{\wt}{\widetilde}

\newcommand{\sse}{\subseteq}
\newcommand{\inv}{^{-1}}

\newtheorem{thm}{Theorem}
\newtheorem{lem}{Lemma}
\newtheorem{prop}[thm]{Proposition}
\newtheorem{cor}[thm]{Corollary}
\theoremstyle{definition}

\theoremstyle{remark}

\author{}

\email{}

\title[]{Multiplicative diophantine approximation on planar lines with restricted denominators}

\author{Lucas Tapia}
\address{Department of Mathematics and Statistics, University of Nevada, Reno, 1664 N. Virginia St., Reno, NV 89557, USA}
\email{ltapiapitzzu@unr.edu}
\begin{document}

\begin{abstract}
    We prove a Khintchine result for convergence of a multiplicative Diophantine set with restricted denominators on an arbitrary non-degenerate line. Specifically, given sequences of real numbers $\{a_n\}_{n\in\N},\, \{b_n\}_{n\in\N},\, \{c_n\}_{n\in\N},\, \{d_n\}_{n\in\N},$ we determine convergence conditions under which the set of $x\in [0,1]$ which satisfy $\norm{a_n x +c_n} \cdot \norm{b_n x + d_n }< \psi(n) $ for infinitely many $n\in\N$ has zero Hausdorff s-measure. We also obtain an upper bound for the Hausdorff dimension in the inhomogeneous setting.
\end{abstract}

\maketitle
\section{Introduction}
Let $\N$ denote the set of positive integers, and let $\psi$ be a nonnegative real function defined on $\N$.   A central theme in metric diophantine approximation is to study, for a given subset $X$ of the $k$-dimensional real Euclidean space $\R^k$, the measure theoretic property of the set of $x\in X$  for which there exist infinitely many $ n\in\N$ such that \[\max\limits_{1\leq i\leq k} \norm{nx_i} < \psi(n),\] where $\norm{x}$ is the  distance from $x$ to the nearest integer. Here $X$ can be any interesting arithmetic/analytic/geometric object, such as algebraic varieties, manifolds, and fractal sets. 

In the simplest case when $X=[0,1]^k$, denote this limsup set in question by $S_k(\psi)$.
A classical theorem of Khintchine \cite{Kh} states that the Lebesgue measure of $S_k(\psi)$ satisfies an elegant zero-one law

 \[\lambda(S_k(\psi)) = \begin{cases}
        1,& \text{if } \displaystyle\sum_{n\in\N} \psi(n)^k = \infty \text{ and } \psi \text{ is monotonic},\\
        0,& \text{if } \displaystyle\sum_{n\in\N} \psi(n)^k < \infty.
    \end{cases}\]
    
 One may also study the problem in the multiplicative setting. Let  
    \[S_k^\times(\psi) := \{x\in [0,1]^k : \prod\limits_{1\leq i\leq k} \norm{nx_i} < \psi(n) \text{ for i.m. } n\in\N\}.\]
    Here ``i.m." stands for ``infinitely many".
   In 1962, Gallagher \cite{Gal} proved that
\begin{equation}\label{Gallagher}\lambda(S_k^\times(\psi)) = \begin{cases}
            1,& \text{ if }\displaystyle\sum_{n\in\N} \psi(n) \log(n)^{k-1} = \infty\text{ and } \psi \text{ is monotonic},\\
            0,& \text{ if }\displaystyle\sum_{n\in\N} \psi(n) \log(n)^{k-1} < \infty.
        \end{cases}\end{equation}
Furthermore, Bovey and Dodson \cite{bovey} extended this Lebesgue theoretic result to the setting of Hausdorff measures in 1978, and obtained for  $s\in (k-1, k)$ that
        \begin{equation}\label{Bovey and Dodson}\mH^s(S_k^\times(\psi)) = \begin{cases}
            \infty,& \text{ if }\displaystyle\sum_{n\in\N} n^{k-s}\psi(n)^{s-k+1} = \infty\text{ and } \psi \text{ is monotonic},\\
            0,& \text{ if }\displaystyle\sum_{n\in\N}n^{k-s}\psi(n)^{s-k+1} < \infty.
        \end{cases}\end{equation}
In spite of these measure theoretic results, a long standing conjecture of Littlewood \cite{little}, which asserts that
\[
S_2^\times\left(\frac{\varepsilon}{n}\right)=[0,1]^2 \quad\text{for every }\varepsilon>0,
\]
remains wide open. 

Many researchers have also studied the measure of $S_2^\times(\psi)\cap \mL$ for a planar line $\mL$. For instance, it is known that for almost all $(x,y)\in \mL$ we have
\begin{equation}\label{chow} \liminf\limits_{n\to\infty} n (\log n)^2 \norm{nx}\norm{ny} = 0,
 \end{equation}
thanks to Beresnevich, Haynes, and Velani \cite{BHV} in the case when $\mL$ is a coordinate line, and Chow and Yang \cite{ChowYang} in the general case. 
    Related inhomogenous results can be found in \cite{CT} and the references therein. 
  
 Other variations of the problem include  allowing for sequences more general than $\{n\}$ as well as inhomogeneous shifts. Precisely, given any four sequences $\{a_n\}$, $\{b_n\}$, $\{c_n\}$, and $\{d_n\}$,  we consider the set
 \begin{equation}\label{M2}M_2(\psi) :=\left\{(x,y)\in[0, 1]^2 : \norm{a_nx + c_n}\cdot\norm{b_n y+d_n} < \psi(n) \text{ for i.m. } n\in\N  \right\}. \end{equation}
 Unless otherwise specified, we always assume that
 $$
 a_n, b_n, c_n, d_n\in\R \quad \text{and}\quad1\le a_n\leq b_n, \quad\text{for all }n\in\N. 
 $$
   Clearly, if $a_n = b_n = n$ and $c_n = d_n = 0$ for all $ n\in\N$, the set $M_2(\psi)$ reduces back to $S_2^\times(\psi)$.
   
   In 2025, Li, Li, and Wu \cite{LLW} were able to show that if $\{a_n\}_{n\in\N}$ and $\{b_n\}_{n\in\N}$ are both sequences of distinct positive integers,  and $c_n = d_n = 0$ for all $n\in \N$, then $M_2(\psi)$ has Hausdorff dimension $\min\{ 2, 1+\tau\} $, where 
 \begin{equation}\label{tau}\tau = \inf  \left\{ s> 0 : \sum_{n\in\N}b_n \left(\frac{\psi(n)}{b_n}\right)^s <\infty \right\}.  \end{equation}
 The methods in \cite{LLW} do not allow for a version of the lower bound of $\dim_\mathrm{H} M_2(\psi)$ in the case that $a_n$ or $b_n$ are not integers, but we present an identical upper bound in the inhomogeneous real case in Proposition \ref{dimension R^2}. 
Essentially, the upper bound for the Hausdorff dimension of $M_2(\psi)$ is blind to any inhomogeneous shifts, and although there is no reason to suspect the lower bound is any different, this remains an open problem. Furthermore, letting $\{a_n\}$ and $\{b_n\}$ attain real values is a superficial allowance in the case of $M_2(\psi)$, as the counting results are identical for integers.

It is not hard to obtain a Khintchine type statement for the set $M_2(\psi)$, as we will demonstrate in Theorem \ref{dimension R^2} and the comment therein. It is, however, of great interest to investigate the intersection of  $M_2(\psi)$ with the line $y=x$. Let
\begin{equation}\label{prototype}
    M(\psi) :=\{x\in[0, 1] : \norm{a_nx + c_n}\cdot\norm{b_n x + d_n} < \psi(n) \text{ for i.m. } n\in\N  \}.
\end{equation}

\begin{thm} \label{integer_shift}
    Suppose that $\{a_n\}$ and $\{b_n\}$ are restricted to be sequences of positive integers. 
    \begin{enumerate}[(a)]
    \item
    For $s\in(0,1)$, we have
    $$
\mathcal{H}^s(M(\psi))=0
$$
under the condition that
    \begin{equation}\label{integerconv}
\sum_{n\in\N}b_n \left(\frac{\psi(n)}{b_n}\right)^s +\sum_{n\in\N} \gcd (a_n, b_n) \left(\frac{\psi(n)}{a_nb_n}\right)^{s/2}<\infty.
    \end{equation}
    \item 
 In the case of the Lebesgue measure, we have
    $$
\lambda(M(\psi))=0
$$
under the condition that
    \begin{equation*}
\sum_{n\in\N}\psi(n)\log\frac{1}{\psi(n)} +\sum_{n\in\N} \gcd (a_n, b_n) \left(\frac{\psi(n)}{a_nb_n}\right)^{1/2}<\infty.
    \end{equation*}
    \end{enumerate}
In particular, it follows that
 \[
    \dim_\mathrm{H}M(\psi) \leq \min\{1, \tau\},
    \] 
    where $\tau$ is the infimum of those $s$ for which \eqref{integerconv} holds.
\end{thm}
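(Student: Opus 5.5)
Since $M(\psi)=\limsup_n E_n$ with
\[
E_n=\{x\in[0,1]:\norm{a_nx+c_n}\cdot\norm{b_nx+d_n}<\psi(n)\},
\]
I would use the Hausdorff–Cantelli lemma. It suffices to cover each $E_n$ by intervals $\{I_{n,j}\}$ with $\sum_n\sum_j |I_{n,j}|^s<\infty$, or, for part (b), $\sum_n \lambda(E_n)<\infty$. To understand $E_n$, I would look at the "resonant points" where both factors are small. Write $x$ near a point where $a_nx+c_n\in\Z$; there are about $a_n$ such points, spaced $1/a_n$ apart. Similarly there are about $b_n$ points where $b_nx+d_n\in\Z$, spaced $1/b_n$ apart. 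Put $u=\norm{a_nx+c_n}$ and $v=\norm{b_nx+d_n}$. Then $E_n$ splits into two kinds of pieces.

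\textbf{Pieces near zeros of $b_nx+d_n$ only.} Since $u\le 1/2$ everywhere, the condition $uv<\psi$ is automatically satisfied when $v<\psi(n)$, and it forces $v<\psi(n)/u$. I would cover $\{x: v\lesssim \psi(n)/u\}$ dyadically in $u$. When $u\asymp 2^{-k}$, the allowed $v$-width is about $2^k\psi(n)$, which gives intervals of length about $2^k\psi(n)/b_n$. This range of $u$ is met only near zeros of $b_nx+d_n$ lying within distance $2^{-k}/a_n$ of a zero of $a_nx+c_n$.

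\textbf{Pieces where both factors are small.} Here the two families of zeros nearly coincide, and the set $\{uv<\psi\}$ is a hyperbolic region of diameter about $\sqrt{\psi(n)/(a_nb_n)}$. I expect this to be the source of the second sum.

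The counting step is the main obstacle. I need to bound the number of pairs $(p,q)$ with $|p/a_n - q/b_n + \text{shift}|<\delta$, where the shift is $c_n/a_n - d_n/b_n$. The differences $qa_n-pb_n$ range over $g\Z$ with $g=\gcd(a_n,b_n)$, and each value is attained about $g$ times within $[0,1]$, i.e.\ $(a_n/g)$-periodic in $p$. So the number of $x$ where the two zero sets are within $\delta$ of each other is at most about $g(1+a_nb_n\delta/g)$. Integrality of $a_n,b_n$ is used exactly here; the inhomogeneous shift only translates the lattice $g\Z$, so the bound $g+a_nb_n\delta$ persists.

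\textbf{Assembling the estimate.} Plugging in, the "generic" term contributes at most $b_n$ intervals of length about $\psi(n)/b_n$, giving $b_n(\psi(n)/b_n)^s$. The dyadic shells with $2^{-k}\gtrsim\sqrt{\psi(n)}$ and separation $\delta\asymp 2^{-k}/a_n$ contribute about
\[
\bigl(g+b_n2^{-k}\bigr)\bigl(2^k\psi(n)/b_n\bigr)^s .
\]
The $b_n2^{-k}$ part sums geometrically (since $s<1$) to $O(b_n(\psi/b_n)^s)$, up to the top shell. The $g$ part is dominated by the smallest scale $2^{-k}\approx\sqrt{\psi}$, where each interval has length about $\sqrt{\psi(n)}/b_n\le\sqrt{\psi(n)/(a_nb_n)}$; for those clusters I would use instead the single hyperbolic cover of diameter $\sqrt{\psi/(a_nb_n)}$, giving $g(\psi/(a_nb_n))^{s/2}$. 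I would need some care so that the geometric sum in the $g$ part is controlled by its endpoint. If it is not, I would cover each near-coincidence cluster directly by one interval of length $\asymp\sqrt{\psi/(a_nb_n)}$ together with the tails handled by the generic count.

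\textbf{Part (b) and the dimension bound.} For $s=1$ the same shells each contribute about $\psi(n)$, and there are about $\log(1/\psi(n))$ of them. This gives
\[
\lambda(E_n)\ll \psi(n)\log\frac1{\psi(n)}+g\left(\frac{\psi(n)}{a_nb_n}\right)^{1/2},
\]
and Borel–Cantelli finishes the proof. The dimension bound is immediate from (a): $\mH^s(M(\psi))=0$ for every $s>\tau$.
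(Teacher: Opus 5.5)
The gap is in the region where $u=\norm{a_nx+c_n}<\sqrt{\psi(n)}$. Your lattice count is correct: $b_np-a_nq\in g\Z$ and each value is taken $O(g)$ times, so pairs of zeros $x_p,y_q$ within distance $\Delta$ number $\ll g+a_nb_n\Delta$. Your $u$-shells down to $u\approx\sqrt{\psi(n)}$ also correctly reproduce the paper's $B(\delta)$. What remains is $E_n\cap\{u<\sqrt{\psi(n)}\}$, the paper's $A(\delta)\cup C(\delta)$, which you cover by $O(g)$ ``hyperbolic clusters'' of diameter $\sqrt{\psi/(a_nb_n)}$. That picture is only right at essentially exact coincidences, and it fails in two ways.

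\emph{Missing pieces near zeros of $a_nx+c_n$.} Where $v=\norm{b_nx+d_n}$ is large, you have nothing. For instance $\{u<2\psi(n)\}\subseteq E_n$ always, which is $\asymp a_n$ intervals of length $\asymp\psi(n)/a_n$, one around every zero of $a_nx+c_n$. Take $a_n=b_n$, $c_n=0$, $d_n=1/2$: then $u+v=1/2$, there are no near-coincidences, and these intervals lie neither in your generic cover $\{v<\psi(n)\}$, nor in your shells, nor in your clusters.

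\emph{Wrong scale near coincidences.} A zero $y_q$ at distance $t\in(\sqrt{\psi/(a_nb_n)},\sqrt{\psi}/a_n)$ from some $x_p$ carries a piece of $E_n$ of length $\asymp\psi/(a_nb_nt)$ centred at $y_q$. This piece is outside the $\sqrt{\psi/(a_nb_n)}$-neighbourhood of any coincidence, and there can be $\asymp g+b_n\sqrt{\psi}$ such $y_q$, not $O(g)$. Charging each of them one interval of length $\sqrt{\psi/(a_nb_n)}$ costs about $b_n\sqrt{\psi}\,(\psi/(a_nb_n))^{s/2}$. That exceeds $b_n(\psi/b_n)^s$ by the factor $(b_n/a_n)^{s/2}\psi^{(1-s)/2}$, and is not controlled by the hypothesis of (a) when $b_n/a_n$ is large. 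Your fallback (``tails handled by the generic count'') does not rescue this, because the generic cover only contains $\{v<\psi(n)\}$.

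The missing ingredient is the reflected family of dyadic shells, the paper's $C(\delta)$: $v\asymp 2^j\sqrt{\psi}$ and $u<2^{-j}\sqrt{\psi}$. Cover each shell by intervals of length $\min\{2^{-j}\sqrt{\psi}/a_n,\,2^{j}\sqrt{\psi}/b_n\}$. Your count, applied with separation $2^{-j}\sqrt{\psi}/a_n+2^{j+1}\sqrt{\psi}/b_n$, gives $\ll g+b_n2^{-j}\sqrt{\psi}+a_n2^{j}\sqrt{\psi}$ of them. The $a_n$- and $b_n$-parts sum to $O(b_n^{1-s}\psi^s)$; the $a_n2^j\sqrt{\psi}$ term is exactly the pieces near zeros of $a_nx+c_n$ alone. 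The $g$-part increases while $2^{2j}\le b_n/a_n$ and decreases afterwards, because the minimum switches there, and its peak is $g(\psi/(a_nb_n))^{s/2}$. It is this switch, not a single hyperbolic cluster, that produces the second sum in (a). With $s=1$, each reflected shell costs $O(\psi)$, which together with the $g$-peak gives (b). Equivalently, you can continue your own $u$-shells below $\sqrt{\psi}$ with interval length $\min\{2^{-k}/a_n,\,2^k\psi/b_n\}$.

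With that repair your argument matches the paper's, except that your elementary gcd count replaces the Erd\H{o}s--Tur\'an discrepancy bound. That replacement is simpler, and it also supplies the $a_n\xi$ term in $N(\eta,\xi)$ that the reflected shells need.
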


In 2021, Zhang and Lü \cite{ZL} proved for $a_n = 2^n$, $b_n = 3^n$, $c_n=0$, $d_n = 0$, and $\psi$ monotonic, that
\begin{equation}\label{LLW dimension}
\dim_\mathrm{H} M(\psi)=\min \{1, \tau\}
\end{equation}
with the same $\tau$ as given in Theorem \ref{integer_shift}. This result has subsequently been extended by Li, Li, and Wu \cite{LLW} to allow both $\{a_n\}$ and $\{b_n\}$ to be sequences of distinct positive integers.

In our next main result, we manage to generalize Theorem \ref{integer_shift} to real sequences $\{a_n\}$ and $\{b_n\}$ at the expense of assuming a more restrictive convergence condition on $\psi$.

\begin{thm}\label{dimension}
   We have
    \[\dim_\mathrm{H} M(\psi) \leq \min\{1, \tau\},\]
    where \begin{equation*}\label{rho}\tau = \inf\left\{s
    > 0: \sum_{n\in\N} \left[  b_n\left(\frac{\psi(n)}{b_n}\right)^s + a_n\left(\frac{\psi(n)}{a_nb_n}\right)^{s/2}   \right]<\infty\right\}. \end{equation*}
\end{thm}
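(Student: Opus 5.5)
The plan is to use the natural cover of the limsup set together with the Hausdorff--Cantelli lemma. Write
\[E_n=\{x\in[0,1]:\norm{a_nx+c_n}\cdot\norm{b_nx+d_n}<\psi(n)\},\]
so that $M(\psi)=\limsup_n E_n$. Since $\dim_\mathrm{H}M(\psi)\le 1$ trivially, it suffices to fix $s\in(\tau,1)$. For such $s$ the series in the statement converges, and I will show that for every $n$ the set $E_n$ admits a cover by intervals $\{I_{n,j}\}_j$ with
\[\sum_j |I_{n,j}|^s \ll b_n\left(\frac{\psi(n)}{b_n}\right)^s + a_n\left(\frac{\psi(n)}{a_nb_n}\right)^{s/2},\]
with an implied constant depending only on $s$. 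Because every interval in this cover has length at most $\max\{\psi(n)/b_n,(\psi(n)/(a_nb_n))^{1/2}\}\to 0$, discarding the first $N$ covers shows $\mH^s(M(\psi))=0$. Hence $\dim_\mathrm{H}M(\psi)\le s$, and letting $s\downarrow\tau$ gives the claim.

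To build the cover, drop the index $n$ and set $f(x)=\norm{ax+c}$, $g(x)=\norm{bx+d}$, with $1\le a\le b$ real. Choose the cutoff $K\ge 0$ so that $2^{-K}\asymp\sqrt{\psi a/b}$; if $\psi a/b\ge 1$ the second term in the bound is already $\gg 1$ and a trivial cover suffices.

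First consider the part of $E$ where $f<2^{-K}$. It lies in at most $a+2$ intervals of length $2^{1-K}/a\asymp(\psi/(ab))^{1/2}$ around the points where $ax+c\in\Z$. These intervals contribute $\ll a(\psi/(ab))^{s/2}$.

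Next consider the shells $2^{-k-1}\le f<2^{-k}$ for $0\le k<K$. The shell lies in $\le a+2$ intervals of length $\asymp 2^{-k}/a$. On it, $g<2^{k+1}\psi$, so $x$ lies within $2^{k+1}\psi/b$ of a zero of $bx+d$. Each shell interval has length $\asymp2^{-k}/a$ and so contains or is adjacent to at most $\ll b2^{-k}/a+1$ such zeros. Hence this piece is covered by $\ll b2^{-k}+a$ intervals of length $\asymp 2^k\psi/b$, with cost
\[\ll \left(b2^{-k}+a\right)\left(\frac{2^k\psi}{b}\right)^s .\]

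Summing over $0\le k<K$ gives two geometric series. The first is $b(\psi/b)^s\sum_k 2^{-k(1-s)}\ll_s b(\psi/b)^s$, which converges precisely because $s<1$. The second, $a(\psi/b)^s\sum_{k<K}2^{ks}$, is dominated by its last term $a(2^K\psi/b)^s$. By the choice of $K$ we have $2^K\psi/b\asymp\sqrt{\psi/(ab)}$, so this term is $\asymp a(\psi/(ab))^{s/2}$. This matches the contribution of the innermost region, and together these give the claimed per-$n$ bound.

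The main obstacle is the counting step: bounding uniformly the number of zeros of $bx+d$ near a zero of $ax+c$ when $a,b$ are arbitrary reals. No gcd structure is available here, unlike in Theorem \ref{integer_shift}. This is why the trivial count $b2^{-k}/a+1$ per interval is used, and the ``$+1$'' summed over the $\asymp a$ intervals is what produces the factor $a_n$ in place of $\gcd(a_n,b_n)$. The remaining work is routine bookkeeping: the choice of $K$, the boundary cases $\psi\ge b/a$ and $2^k\psi/b\ge 1$, and the endpoints of $[0,1]$. All of these only affect constants.
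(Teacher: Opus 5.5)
Your proof is correct. It uses essentially the paper's decomposition, but a different and more elementary counting step.

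\textbf{Decomposition.} Your dyadic shells in $\norm{ax+c}$, with cutoff $2^{-K}\asymp\sqrt{\psi a/b}$, match the paper's splitting $E(\delta)=A(\delta)\cup B(\delta)\cup C(\delta)$ with $\delta^2=\psi$. The paper's split of $C(\delta)$ at $2^{2j}=b/a$ is exactly your cutoff.

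\textbf{Counting.} The paper bounds $N(\eta,\xi)$ by fixing $q$ and applying the Erd\H{o}s--Tur\'an inequality to $\{aq/b\}$. This gives $N(\eta,\xi)\ll(b\eta+a)L$ with $L=\max\{1,\log b/a\}$. As a result, Proposition \ref{conv case} carries the extra $\log b_n/a_n$ and $\log b_n$ terms. To reach Theorem \ref{dimension} one must then absorb $L_n$ by passing from $s$ to some $s'>s$, using $(\psi(n)/b_n)^{s'-s}\le b_n^{-(s'-s)}$ once $\psi(n)\le1$. You instead fix a zero $(p-c)/a$ of $ax+c$ and count the zeros of $bx+d$ near it. This is the trivial bound
$N(\eta,\xi)\le(a+3)(2b\eta/a+2\xi+1)\ll b\eta+a$, which has no $L$ at all.

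\textbf{What each route buys.} Your argument gives the theorem directly. It also proves the stronger statement that $\mH^s(M(\psi))=0$ whenever $\sum_n\bigl[b_n(\psi(n)/b_n)^s+a_n(\psi(n)/(a_nb_n))^{s/2}\bigr]<\infty$, i.e.\ Proposition \ref{conv case} without its logarithmic terms. The exponential-sum method earns its keep only in the integer case (Lemma \ref{integer key}), where cancellation replaces $a$ by $\gcd(a,b)$. For real $a,b$ it does no better than your count.

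\textbf{Two points to write out.}
\begin{enumerate}[(i)]
\item The claim that the series converges for every $s\in(\tau,1)$ needs justification. Convergence at some $s_0<1$ forces $\psi(n)\to0$, so eventually $\psi(n)\le b_n\le a_nb_n$, and then each term is nonincreasing in $s$.
\item Counting the zeros of $bx+d$ lying within $2^{k+1}\psi/b$ of a shell interval requires $2^{k+1}\psi\ll b2^{-k}/a$. This is exactly where $k<K$, i.e.\ $2^{2k}\psi\ll b/a$, is used.
\end{enumerate}
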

We obtain this result as a consequence of a convergence implication in the spirit of \eqref{Bovey and Dodson}; see Proposition \ref{conv case} below. Incidentally, we also obtain a similar Lebesgue convergence result, as in \eqref{Gallagher}; see Proposition \ref{Lebesgue}.
It is unknown if $\dim_\mathrm{H}(M(\psi)) \geq \min\{1, \tau\}$ in general.

As a corollary, we restrict our attention to specific exponential values of $a_n$ and $b_n$. 
Here and throughout the paper, the natural logarithmic function $\log x$ is refined to be 1 for $x\le e$.

   \begin{cor}\label{exponent}
   Suppose that $\{a_n\}=\{a^n\}$ and $\{b_n\}=\{b^n\}$ for some $a,b\in \R$ with $1<a<b$. 
        For $  \max\left\{2-\frac{\log b}{\log a},0\right\}<s<1$, we have 
		\[\sum_{n\in\N} b^n \left( \frac{\psi(n)}{b^n}\right)^s < \infty  \implies \mH^s(M(\psi)) = 0.\]
        In particular, if $a^2 \leq b$, the conclusion holds for all $s\in (0, 1)$.
    \end{cor}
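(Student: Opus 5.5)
The plan is to deduce the corollary from the convergence result behind Theorem \ref{dimension}. That result is Proposition \ref{conv case}, which I take to say: for $s\in(0,1)$,
\[
\sum_{n\in\N} \left[ b_n\left(\frac{\psi(n)}{b_n}\right)^s + a_n\left(\frac{\psi(n)}{a_nb_n}\right)^{s/2}\right]<\infty \implies \mH^s(M(\psi))=0 .
\]
With $a_n=a^n$ and $b_n=b^n$, the first series is exactly the hypothesis. So everything reduces to showing that, for $s$ in the stated range, the second series
\[
\sum_{n\in\N} a^n\left(\frac{\psi(n)}{a^nb^n}\right)^{s/2}
\]
is controlled by the first one.

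The first step is to compare the two summands term by term. Write $\psi(n)=b^n x_n$, so the $n$-th term of the first series is $b^n x_n^s$. The $n$-th term of the second series becomes
\[
a^n\left(\frac{x_n}{a^n}\right)^{s/2}=a^{n(1-s/2)}x_n^{s/2}.
\]
The convergence of the first series forces $b^n x_n^s\to 0$, and in particular $x_n^s\le b^{-n}$ for all large $n$.

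The second step is to bound the second term by a geometric quantity. Since $x_n^{s/2}=(x_n^s)^{1/2}\le b^{-n/2}$ eventually, we get
\[
a^{n(1-s/2)}x_n^{s/2}\le \left(a^{1-s/2}b^{-1/2}\right)^n .
\]
This gives a convergent geometric series exactly when $a^{2-s}<b$, that is, when $(2-s)\log a<\log b$, or equivalently $s>2-\log b/\log a$. That is precisely the hypothesis, and the lower bound $s>0$ is automatic.

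The third step handles the case $a^2\le b$. Then $2-\log b/\log a\le 0$, so every $s\in(0,1)$ satisfies the condition. At the boundary $a^2=b$ with $s$ small the inequality is still strict, since $s>0$.

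I expect no real obstacle here. The only point that needs care is using the tail bound $x_n^s\le b^{-n}$, which gives a fixed geometric rate, rather than trying a Hölder or Cauchy–Schwarz comparison of the two series directly. Those comparisons would not produce the sharp threshold $2-\log b/\log a$.
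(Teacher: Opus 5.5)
Your argument is correct once you add one check you skipped. Proposition \ref{conv case} is not the two-term statement you quote. Its hypothesis \eqref{s-measure convergence} also contains the terms $b_n(\psi(n)/b_n)^s\frac{\log b_n}{a_n}$ and $(\psi(n)/(a_nb_n))^{s/2}\log b_n$, which come from the factor $L_n=\max\{1,\log b_n/a_n\}$ in Lemma \ref{Edelta measure}. Here $\log b_n/a_n=n\log b/a^n\to 0$, so $L_n=1$ for large $n$, and these terms are dominated by the two you treat. The paper verifies exactly this at the start of its proof, and you should add it.

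Beyond that, your route differs from the paper's in how the second series is controlled. The paper uses no tail bound. It replaces $\psi$ by $\wt\psi(n)=\max\{\psi(n),(a_n/b_n)^{(2-s)/s}\}$, which only enlarges the limsup set. This makes $a_n(\wt\psi(n)/(a_nb_n))^{s/2}\le b_n(\wt\psi(n)/b_n)^s$ pointwise, at the additive cost $\sum_n a_n^{2-s}b_n^{-1}=\sum_n (a^{2-s}/b)^n$, and the paper then applies Lemma \ref{Edelta measure} with Borel--Cantelli. In effect this is AM--GM applied to the factorization $a_n^{1-s/2}b_n^{-s/2}\psi(n)^{s/2}=\bigl(a_n^{2-s}b_n^{-1}\bigr)^{1/2}\bigl(b_n^{1-s}\psi(n)^{s}\bigr)^{1/2}$.

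Your tail bound produces the majorant $\sum_n (a^{2-s}/b)^{n/2}$ instead. For geometric sequences both series converge exactly when $a^{2-s}<b$, so nothing is lost here. Your version has the small advantage of checking the proposition's hypothesis directly, without modifying $\psi$. For general $\{a_n\},\{b_n\}$, however, your method needs $\sum_n (a_n^{2-s}/b_n)^{1/2}<\infty$. The paper's device (equivalently, Cauchy--Schwarz on the factorization above) needs only $\sum_n a_n^{2-s}/b_n<\infty$. So your closing remark is backwards: Cauchy--Schwarz gives the same sharp threshold here and a weaker requirement in general.
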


The organization of the paper is as follows. As is custom, we recall the definition of the Hausdorff measure in section 2. In section 3, we prove some counting results (Lemma \ref{key}) that transfer to a nontrivial covering (Lemma \ref{cover}), then estimate the measure of the cover (Lemmas \ref{Edelta measure} , \ref{edelta measure lebesgue}).  In section 4, we use the counting to prove convergence results used in the proof of Theorem \ref{dimension}, and then prove the rest of the Theorems. As we are not concerned with the complementary divergence statements 
and our counting result holds for fixed $n$,  
we do not need $\psi$ to be monotonic.

Throughout, we use the notation $e(x)$ to mean $e^{2\pi i x},$ and we interchangeably utilize the Vinogradov and Bachmann-Landau notations $f\ll g$ or $f=O(g)$ if there is a constant $C>0$ such that $|f| \leq Cg.$ The notation $f\asymp g$ is used when $f\ll g$ and $g\ll f.$ Unless otherwise noted, all implied constants are absolute, and in particular do not depend on $n$. We also use the floor function $\lfloor x\rfloor = \max\{n \in \N : n\leq x\}$ and the ceiling function $\lceil x \rceil = \min \{n\in \N : x\leq n\}$.

\section{Definition of the Hausdorff measure}

For completeness, we recall the definition of the Hausdorff measure and dimension. Let $ F\sse \R^k $. For $ \rho>0 $, a $ \rho $-cover of $ F $ is a countable collection of balls $ \{B_i\} $, where $  B_i $ is a ball with radius $ r_i<\rho$, such that $ F\sse \bigcup_i B_i $.
		For $ s\ge0 $, define \[\mH^s_\rho(F) = \inf \sum_ir_i^s,\] where the infimum is taken over all $ \rho $-covers of $ F $.
		Then the Hausdorff $s$-measure of $F$ is defined as  \[\mH^s(F) = \sup\limits_{\rho>0}\mH^s_\rho(F) = \lim\limits_{\rho\to 0}\mH^s_\rho(F),\]
which exists as a finite number or approaches $\infty$. Moreover, the Hausdorff dimension of $F$ is therefore defined by
$$
\dim_\mathrm{H}(F)=\inf\{s:\mathcal{H}^s(F)=0\}=\sup\{s:\mathcal{H}^s(F)=\infty\}.
$$

We end this section by stating the Borel-Cantelli lemma for Hausdorff measures. 
\begin{lem}[\cite{BV} Lemma 3.10]\label{cantelli}
Let $\{H_j\}$ be a countable collection of hypercubes in $\R^k$ and suppose
that for some $s>0$, 
\[\sum_{j} \text{diam}(H_j)^s < \infty.\]
Then $ \mathcal{H}^s(\limsup H_j) = 0 $.
	\end{lem}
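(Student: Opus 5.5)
The plan is to cover the limsup set by tails of the given family and let the tail go to zero. It uses only the definition of $\mH^s$ recalled above.

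Set $d_j = \text{diam}(H_j)$. By hypothesis $\sum_j d_j^s < \infty$ with $s>0$, so $d_j^s \to 0$ and hence $d_j \to 0$. Moreover, for every $\varepsilon>0$ there is $N$ with $\sum_{j\ge N} d_j^s < \varepsilon$. By definition of the limsup,
\[
\limsup_{j} H_j = \bigcap_{N\ge1}\bigcup_{j\ge N} H_j \subseteq \bigcup_{j\ge N} H_j \quad \text{for every } N .
\]

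Next I convert each hypercube into an admissible ball. Fix any point $x_j \in H_j$. Then $H_j$ is contained in the closed ball of radius $d_j$ about $x_j$, and so in the open ball $B_j$ of radius $r_j = d_j + \eta_j$, where $\eta_j>0$ is small. The only delicate point is the strict inequality $r_i<\rho$ in the paper's definition of a $\rho$-cover, together with possibly degenerate cubes ($d_j=0$). I handle both with the slack $\eta_j$: given $\varepsilon>0$, take $\eta_j$ so small that $\sum_j \big((d_j+\eta_j)^s - d_j^s\big) < \varepsilon$. This is possible because $t\mapsto t^s$ is continuous, so one may choose $\eta_j$ with $(d_j+\eta_j)^s - d_j^s < \varepsilon 2^{-j}$.

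Now fix $\rho>0$ and $\varepsilon>0$. Since $d_j\to0$, choose $N$ large enough that $d_j+\eta_j<\rho$ for all $j\ge N$ (shrinking $\eta_j$ if necessary) and that $\sum_{j\ge N} d_j^s<\varepsilon$. Then $\{B_j\}_{j\ge N}$ is a countable $\rho$-cover of $\limsup H_j$. It follows that
\[
\mH^s_\rho(\limsup_j H_j) \le \sum_{j\ge N} r_j^s \le \sum_{j\ge N} d_j^s + \varepsilon < 2\varepsilon .
\]
Letting $\varepsilon\to0$ gives $\mH^s_\rho(\limsup_j H_j)=0$ for every $\rho>0$. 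Taking the supremum over $\rho$ yields $\mH^s(\limsup_j H_j)=0$.

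There is no real obstacle here. The only point needing care is the bookkeeping between diameters of cubes and radii of balls in the paper's normalization, including the strict inequality $r_i<\rho$. That bookkeeping only changes the sum by a constant factor and the arbitrarily small slack $\varepsilon$, neither of which affects the conclusion that the measure is zero.
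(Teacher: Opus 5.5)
Your proposal is correct and is essentially the standard tail-covering argument. The paper gives no proof of its own and simply cites Beresnevich--Velani, where the lemma is proved the same way: cover $\limsup H_j$ by $\bigcup_{j\ge N} H_j$, use $d_j\to 0$ to get an admissible $\rho$-cover, and let the tail sum $\sum_{j\ge N} d_j^s$ go to zero. Your slack $\eta_j$ for the strict inequality $r_i<\rho$ and for degenerate cubes is harmless.
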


\section{Preliminary Estimates}
\subsection{The real case}
Throughout this section, we assume that $a,b,c,d\in\R$ and $1\le a\le b$.

\subsubsection{A covering result and lattice counting}
       
For $0<\eta, \xi < 1$, consider the set 
	\begin{align*}
		F(\eta, \xi ) &= \{x\in \left[ 0,1 \right] : \norm{ax+c} < \eta,\, \norm{bx+d} < \xi\}.
	\end{align*} 
Then \[F(\eta, \xi) \subseteq \bigcup_{\substack{(p, q)\in \Z^2\\ \lfloor c\rfloor\leq p\leq \lceil a +c \rceil\\ \lfloor d \rfloor\leq q \leq \lceil b+d \rceil }} B\left(\frac{p-c}{a}, \frac{\eta}{a}\right)\cap B\left(\frac{q-d}{b}, \frac{\xi}{b}	 \right)\,,\]
which can be covered by  
    \begin{equation*}\label{N(eta, delta)}N(\eta, \xi):=\#\left\{(p, q)\in \Z^2 : \lfloor c\rfloor\leq p\leq \lceil a +c \rceil,\, \lfloor d \rfloor\leq q \leq \lceil b+d \rceil  , \left\lvert \frac{p-c}{a} - \frac{q-d}{b} \right\rvert < \frac{\eta}{a}+ \frac{\xi}{b}\right\}\end{equation*}
    many intervals of length $2\min\left\{\frac\eta{a},\frac{\xi}{b}\right\}$.

Write
   \begin{equation}\label{L}
   L= \max\left\{1, \frac{\log{b}}{a}\right\}.
   \end{equation}

\begin{lem}\label{cover}
    The set $ F(\eta, \xi) $ can be covered by at most $ O(b\eta L+aL)$ many intervals of length $ \min\left\{\frac{\eta}{a}, \frac{\xi}{b}\right\} .$
\end{lem}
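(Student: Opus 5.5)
The plan is to bound the lattice count $N(\eta,\xi)$ defined just before the lemma, and then convert it into a cover by intervals of the stated length. By the inclusion displayed above, $F(\eta,\xi)$ lies in the union, over the admissible pairs $(p,q)$, of the sets $B\bigl(\frac{p-c}{a},\frac{\eta}{a}\bigr)\cap B\bigl(\frac{q-d}{b},\frac{\xi}{b}\bigr)$. Each such set is an interval of length at most $2\min\{\eta/a,\xi/b\}$, so it is covered by two intervals of length $\min\{\eta/a,\xi/b\}$. Moreover, a pair can contribute a nonempty intersection only if the two centres are within $\eta/a+\xi/b$ of each other, which is exactly the condition defining $N(\eta,\xi)$. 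Hence it suffices to show
\[
N(\eta,\xi)\ll b\eta+a ,
\]
which implies the lemma because $L\ge 1$.

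To count, I fix $p$ and count the admissible $q$. The range $\lfloor c\rfloor\le p\le\lceil a+c\rceil$ contains at most $a+3$ integers. Multiplying the defining inequality by $b$, the condition on $q$ becomes
\[
\Bigl|\,q-d-\tfrac{b}{a}(p-c)\Bigr|<\frac{b\eta}{a}+\xi .
\]
This confines $q$ to an open interval of length $2(b\eta/a+\xi)$, which contains at most $2b\eta/a+2\xi+1\le 2b\eta/a+3$ integers, using $\xi<1$.

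Multiplying the two counts gives
\[
N(\eta,\xi)\le (a+3)\Bigl(\frac{2b\eta}{a}+3\Bigr)\ll b\eta+a ,
\]
where the last step uses $a\ge 1$, so that $(a+3)/a\le 4$ and $a+3\le 4a$. Since $L\ge1$, this is $O(b\eta L+aL)$, and doubling for the two subintervals per pair only changes the constant.

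I do not expect a genuine obstacle. The only points needing care are that the intersection of the two balls has length at most twice the minimum radius, and that the implied constant is absolute, which follows from $a\ge1$ and $\xi<1$. The factor $L$ in the statement is not needed for this crude count; presumably it is there to match a finer estimate, perhaps one obtained via exponential sums (Lemma \ref{key}), used later in the paper. The trivial row-by-row count already gives the stated bound.
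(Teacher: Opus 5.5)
Your proof is correct, and it takes a different and more elementary route than the paper. The paper reduces the lemma to the counting estimate of Lemma~\ref{key}, $N(\eta,\xi)\ll(b\eta+a)L$, and proves it in two regimes. When $\eta+\frac{a}{b}\xi>\frac12$, it counts trivially in the other direction: for each $q$ there are at most four values of $p$, so $N\ll b\ll b\eta+a$. When $\eta+\frac{a}{b}\xi\le\frac12$, it applies the Erd\H{o}s--Tur\'an inequality (Lemma~\ref{erdos}) to $u_q=\frac{a}{b}(q+\lfloor d\rfloor-1)-\frac{ad}{b}+c$ with $K=\lfloor b/a\rfloor$, bounding the exponential sums by $\min\{Q,\norm{ka/b}^{-1}\}$. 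The factor $L$ enters through the harmonic sum $\sum_{k\le K}\frac{b}{ak}\ll K\log K$. You instead fix $p$ (fewer than $a+3$ choices) and observe that the admissible $q$ lie in an interval of length $2(b\eta/a+\xi)$, which gives $N(\eta,\xi)\le 8b\eta+12a$ in all regimes at once. This is actually stronger than Lemma~\ref{key}: it carries no factor $L$, saving up to a factor $\log b$ when $a$ is small. With the paper's choice of $K$, the Erd\H{o}s--Tur\'an error term already contains $Q/(K+1)\asymp a$. So in the real case the exponential-sum argument cannot beat your count and only adds the logarithm. As a consequence, $L$ could be deleted from Lemmas~\ref{Edelta measure} and~\ref{edelta measure lebesgue}, and the terms involving $\log b_n$ could be dropped from the hypotheses of Propositions~\ref{conv case} and~\ref{Lebesgue}. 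What the paper's framework does buy is the integer case, Lemma~\ref{integer key}, where the exponential sums vanish unless $\frac{b}{(a,b)}\mid k$ and $a$ is replaced by $\gcd(a,b)$. Your row-by-row count does not capture that arithmetic saving as written, although an elementary periodicity argument in $q$ would recover it.
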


In view of the above discussion, to prove Lemma \ref{cover}, it suffices to show the next lattice counting estimate.

\begin{lem}\label{key}
		We have \begin{equation*}\label{counting}N(\eta, \xi) \ll ( b\eta  +  a)L,\end{equation*}
         where the implicit constant is absolute.
	\end{lem}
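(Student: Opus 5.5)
The plan is to reduce Lemma~\ref{key} to a direct count over $p$, with no Fourier analysis or exponential sums. It will turn out that the factor $L\ge 1$ is not needed, so the bound $N(\eta,\xi)\ll b\eta+a$ should already hold; the claimed estimate then follows because $L\ge1$.

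\emph{Step 1: clear denominators.} Multiply the defining inequality of $N(\eta,\xi)$ by $ab>0$. A pair $(p,q)$ is counted only if
\[
\left|\,bp-aq-K\,\right|<b\eta+a\xi\le b\eta+a=:\delta,\qquad K:=bc-ad,
\]
where we used $\xi<1$. The range constraints $\lfloor c\rfloor\le p\le\lceil a+c\rceil$ and $\lfloor d\rfloor\le q\le\lceil b+d\rceil$ are kept.

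\emph{Step 2: count $q$ for each fixed $p$.} The number of admissible integers $p$ is at most $a+3\le 4a$, since $a\ge1$. For a fixed $p$, the condition on $q$ says that $aq$ lies in the open interval of length $2\delta$ centred at $bp-K$. So $q$ lies in an interval of length $2\delta/a$, which contains at most $2\delta/a+1$ integers.

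\emph{Step 3: sum over $p$.} This gives
\[
N(\eta,\xi)\le 4a\left(\frac{2\delta}{a}+1\right)=8\delta+4a=8b\eta+12a\ll(b\eta+a)L,
\]
with an absolute implied constant, using $L\ge1$ from \eqref{L}.

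\emph{Main obstacle.} There is no real obstacle here. The only point needing care is to sum over $p$, the variable with the shorter range (about $a$ values), rather than over $q$. Summing over $q$ would give roughly $b(2\delta/b+1)$, which produces an error term of size $b$ instead of $a$. With this lemma in hand, Lemma~\ref{cover} follows from the covering discussion preceding it. The intervals of length $2\min\{\eta/a,\xi/b\}$ can each be split into two intervals of length $\min\{\eta/a,\xi/b\}$, which only changes the constant.
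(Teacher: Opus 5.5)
Your proof is correct, and it takes a genuinely different and more elementary route than the paper's.

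\textbf{The paper's route.} The paper sums over the long variable. For each of the $\asymp b$ values of $q$ there are $O(1)$ admissible $p$, so it must count the $q$ with $\norm{aq/b-ad/b+c}<\eta+\frac{a}{b}\xi$. It does this with the Erd\H{o}s--Tur\'an inequality (Lemma~\ref{erdos}) and $K=\lfloor b/a\rfloor$, after treating the case $\eta+\frac{a}{b}\xi>\frac12$ separately and trivially. The factor $L$ enters exactly through the harmonic sum $\sum_{k\le K}\min\{Q,\norm{ka/b}^{-1}\}\ll Q+K\log K$.

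\textbf{Your route.} As you note, you sum over the short variable $p$, which takes $O(a)$ values. For each $p$ you count $q$ trivially in an interval of length $2(b\eta+a\xi)/a$. This gives $N(\eta,\xi)\le 8b\eta+12a$ with no case split and no logarithmic loss. That is a stronger statement, since $L$ can be as large as $\log b$, for instance when $a=1$.

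\textbf{What your bound gives downstream.} Fed into Lemmas~\ref{cover}, \ref{Edelta measure} and \ref{edelta measure lebesgue}, it removes every $L_n$. The terms involving $\log b_n$ then drop out of the hypotheses of Propositions~\ref{conv case} and \ref{Lebesgue}. The condition in Proposition~\ref{conv case} becomes exactly the series defining $\tau$ in Theorem~\ref{dimension}, so no absorption of logarithms is needed there.

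\textbf{What the Fourier route buys.} It is sensitive to the arithmetic of $a/b$. The paper reuses it in the integer case, where $\sum_q e(kaq/b)$ vanishes unless $\frac{b}{(a,b)}\mid k$, to replace $a$ by $(a,b)$ in the error term. For real $a,b$, however, it gives nothing beyond your direct count and costs the factor $L$.
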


    Before we embark on the proof of Lemma \ref{key}, we need to state one more lemma.

    \begin{lem}[Erd\H os-Tur\'an Inequality (\cite{mo}, Theorem 1.1)]\label{erdos}
Let $\mathcal{U}$ be a sequence of $Q$ elements in $\mathbb{T}:=\mathbb{R}/\mathbb{Z}$. Then for any closed interval $I\subseteq\mathbb{T}$ and positive integer $K$, the  discrepancy  function
\[ D(\mathcal{U},I) := \# (\mathcal{U}\cap I) -|I| Q\]
satisfies the estimate
\[
 |D(\mathcal{U},I)| \leq \frac{Q}{K+1} +   2\sum_{k=1}^K \left(\frac{1}{K} + \min\left(|I|, \frac{1}{\pi k}\right)\right) \left| \sum_{u\in\mathcal{U}} e(ku)\right|.
 \]
\end{lem}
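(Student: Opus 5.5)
The plan is the standard Beurling--Selberg extremal function argument. We sandwich the indicator function $\chi_I$ of the interval between two trigonometric polynomials of degree at most $K$. We then sum these polynomials over the points of $\mathcal{U}$ and read off the discrepancy from their Fourier coefficients.

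\textbf{Step 1 (extremal polynomials).} We invoke the Beurling--Selberg--Vaaler construction. For a closed arc $I\subseteq\mathbb{T}$ and a positive integer $K$, it provides trigonometric polynomials
\[
S_\pm(x)=\sum_{|k|\le K}\widehat{S_\pm}(k)\,e(kx)
\]
with the following properties:
\begin{enumerate}[(i)]
\item $S_-(x)\le \chi_I(x)\le S_+(x)$ for all $x\in\mathbb{T}$;
\item $\widehat{S_\pm}(0)=|I|\pm\frac{1}{K+1}$;
\item $\bigl|\widehat{S_\pm}(k)-\widehat{\chi_I}(k)\bigr|\le \frac{1}{K+1}$ for $1\le|k|\le K$.
\end{enumerate}
These polynomials are built from Vaaler's polynomial approximating the sawtooth function $x-\lfloor x\rfloor-\tfrac12$, together with the Fej\'er kernel. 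The Fej\'er kernel's nonnegativity delivers (i), and its mass $\frac{1}{K+1}$ delivers (ii) and (iii). Separately, a direct computation gives
\[
\widehat{\chi_I}(k)=\frac{e(-k\alpha)-e(-k\beta)}{2\pi i k}\quad\text{for } I=[\alpha,\beta],
\]
so that $|\widehat{\chi_I}(k)|\le\min\bigl(|I|,\frac{1}{\pi|k|}\bigr)$.

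\textbf{Step 2 (summing over $\mathcal{U}$).} By (i),
\[
\#(\mathcal{U}\cap I)\le\sum_{u\in\mathcal U}S_+(u)=Q\,\widehat{S_+}(0)+\sum_{1\le|k|\le K}\widehat{S_+}(k)\sum_{u\in\mathcal{U}}e(ku).
\]
Combining (ii), (iii) and the coefficient bound for $\widehat{\chi_I}$, and pairing $k$ with $-k$ (the exponential sums for $\pm k$ have equal modulus), we get
\[
D(\mathcal{U},I)\le\frac{Q}{K+1}+2\sum_{k=1}^K\Bigl(\frac{1}{K+1}+\min\Bigl(|I|,\frac{1}{\pi k}\Bigr)\Bigr)\Bigl|\sum_{u\in\mathcal{U}}e(ku)\Bigr|.
\]
The same computation with $S_-$ gives the matching lower bound for $D(\mathcal{U},I)$. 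Finally, replacing $\frac{1}{K+1}$ by the larger $\frac1K$ inside the sum yields the stated inequality.

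\textbf{Main obstacle.} The only nontrivial input is Step 1: the existence of $S_\pm$ with exactly these coefficient bounds, i.e.\ Vaaler's extremal polynomial for the sawtooth function. I would cite it rather than reprove it, since it is the content of the reference \cite{mo}. If a self-contained route were required, I would instead convolve $\chi_{I^\pm}$ with the Fej\'er kernel, where $I^\pm$ is $I$ enlarged or shrunk by about $1/K$. This gives the majorant and minorant directly, but with worse absolute constants than those claimed. Those constants do not matter for the application in Lemma~\ref{key}.
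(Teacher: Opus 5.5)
Your proposal is correct. The paper gives no proof of its own: it simply cites Montgomery's \emph{Ten Lectures}, where the inequality is proved by exactly this Selberg--Vaaler majorant/minorant argument, and your final replacement of $\frac{1}{K+1}$ by $\frac1K$ yields the weaker form stated here. One small tightening: property (iii) follows directly from (i) and (ii), since $S_+-\chi_I\ge 0$ and $\chi_I-S_-\ge 0$ each have integral $\frac{1}{K+1}$, so all their Fourier coefficients are bounded in modulus by $\frac{1}{K+1}$.
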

\begin{proof}[Proof of Lemma \ref{key}]
    
    We will consider the case where $\eta + \frac{a}{b}\xi> \frac{1}{2}$ first. 
    For every value of $q$, there are at most four values of $p$ that are counted in $N(\eta, \xi)$, since
    \begin{equation*}
        \left| p - c - \frac{aq-ad}{b}\right| < \eta + \frac{a}{b}\xi < 2.
    \end{equation*}
    Therefore \begin{equation*}
        N(\eta, \xi) \leq 4(b+2)  \ll b\eta + a.
    \end{equation*}

    It remains to deal with the case $\eta + \frac{a}{b}\xi\le \frac{1}{2}$. To that end, denote
$$
\delta = \eta + \frac{a}{b}\xi.
$$
    Then $N(\eta,  \xi)$ is bounded above by 
    \begin{align*}\#\left\{ q\in \Z : \lfloor d \rfloor\leq q \leq \lceil b+d \rceil ,  \norm{\frac{aq}{b}-\frac{ad}{b} + c}< \delta\right\}\,.
	\end{align*}
    Now consider the sequence 
    \[\mathcal{U}:=\{u_q\}_{q=1}^Q\]
where
$$u_q := \frac{a}{b} (q + \lfloor d\rfloor - 1) - \frac{ad}{b} + c,$$
and $$Q= \lceil b + d\rceil -\lfloor d\rfloor + 1.$$
It is clear that $$Q\asymp b.$$
Next  we apply the Erd\H os-Tur\'an inequality (Lemma \ref{erdos}) to $\mathcal{U}$ with 
$$I=[-\delta,\delta]$$
and
$$
K=\left\lfloor\frac{b}{a}\right\rfloor.
$$
On observing that	
\begin{align*}
	\sum_{k=1}^K\left|\sum_{q=1}^Qe(ku_q)\right|&= \sum_{k=1}^K  \left| \sum_{q=1}^{Q}e\left(k\left(\frac{a}{b} (q + \lfloor d\rfloor - 1) - \frac{ad}{b} + c\right)\right)\right| \\
   &=\sum_{k=1}^K\left|\sum_{q=1}^{Q}e\left(\frac{kaq}{b } \right)\right|\\
   &\ll \sum_{k=1}^K \min\left\{Q, \norm{\frac{ka}{ b}}\inv\right\}\\
   &\ll Q+\sum_{k=1}^K\frac{b}{ak}\\
   &\ll Q+K\log K,
\end{align*}
we obtain
\begin{align*}
			|D(\mathcal{U},I)|
			&\ll \frac{Q}K+\left(\frac1K+\delta\right)\Big(Q+K\log K \Big) \\
            &\ll  \left(\frac{b}K+\delta b\right)\left(\frac{Q}b+\frac{K}{b}\log K \right)\\
            &\ll (a+\delta b)\left(1+\frac1a\log \frac{b}{a}\right)\\
            &\ll (a+\delta b)L,
		\end{align*}
where in the last line there is no loss in replacing $\log \frac{b}{a}$ by $\log b$ since
$$
0\le \frac1a\left(\log b-\log \frac{b}{a}\right)\le\frac{\log a}a\le1.
$$
So 
$$
N(\eta,\xi)\ll \delta Q+|D(\mathcal{U},I)|\ll (b\eta +a)L.
$$
\end{proof}

    \subsubsection{Hausdorff Measure Estimates}
We are now poised to estimate the Hausdorff measure of the set
          \begin{equation*}\label{Edelta}	 E(\delta) =\{	x\in [0,1] : \norm{a x+c} \cdot \norm{bx+d} < \delta^2\}.\end{equation*}
        
    \begin{lem}\label{Edelta measure}
       For $s\in (0, 1)$, $\delta\in(0,1/2]$, and $\rho>1$, we have
        \begin{equation*}\label{Edelta measure equation}\mH_\rho^s(E(\delta)) \ll b\left(\frac{\delta^2}{b}\right)^sL +  a\left(\frac{\delta^2}{ab}\right)^{s/2}L,\end{equation*}
        where the implicit constant only depends on $s$, and $L$ is given in \eqref{L}.
    \end{lem}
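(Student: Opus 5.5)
We will cover $E(\delta)$ by sets of the form $F(\eta,\xi)$ obtained through a dyadic decomposition of the product condition, and then apply Lemma \ref{cover} to each piece. If $x\in E(\delta)$, then either $\norm{ax+c}<\delta\sqrt{a/b}$, or $\norm{ax+c}\in[2^{i}\delta\sqrt{a/b},\,2^{i+1}\delta\sqrt{a/b})$ for some $i\ge 0$ with $2^{i}\delta\sqrt{a/b}\le 1/2$. In the first range we only know $\norm{bx+d}\le 1/2$. In the $i$-th range we also get $\norm{bx+d}<\delta\sqrt{b/a}\,2^{-i}$. So, with $\eta_0=\delta\sqrt{a/b}$ and $\xi_0=1/2$ (or $\xi_0=1$), and $\eta_i=2^{i+1}\delta\sqrt{a/b}$, $\xi_i=2^{-i}\delta\sqrt{b/a}$, we have
\[
E(\delta)\subseteq F(\eta_0,\xi_0)\cup\bigcup_{i\ge0}F(\eta_i,\xi_i).
\]
The union over $i$ has at most $O(\log(1/\delta))$ nonempty terms. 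The point of this decomposition is that $\eta_i\xi_i\asymp\delta^2$ on every piece, and the balanced scale $\eta=\delta\sqrt{a/b}$, $\xi=\delta\sqrt{b/a}$ makes $\eta/a=\xi/b=\delta/\sqrt{ab}$.

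For each piece, Lemma \ref{cover} gives $O((b\eta+a)L)$ intervals of length $\min\{\eta/a,\xi/b\}$. For $i\ge0$ this length is $\min\{2^{i+1}\delta/\sqrt{ab},\;2^{-i}\delta/\sqrt{ab}\}\le 2^{-i}\delta/\sqrt{ab}$, so the $s$-cost of the $i$-th piece is at most
\[
\bigl(2^{i+1}\delta\sqrt{ab}+a\bigr)L\cdot 2^{-is}\Bigl(\frac{\delta^2}{ab}\Bigr)^{s/2}.
\]
The term carrying $a$ sums over $i$ by the geometric factor $2^{-is}$ to $\ll_s aL(\delta^2/ab)^{s/2}$, which is the second term of the claimed bound.

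The term carrying $2^{i}\delta\sqrt{ab}$ does not sum geometrically. It grows like $2^{i(1-s)}$ and is therefore dominated by its largest admissible $i$, where $2^{i}\delta\sqrt{a/b}\asymp 1$, i.e. $\eta\asymp1$. At that endpoint $\xi\asymp\delta^2$ and the interval length is $\asymp\delta^2/b$. The count is $\ll bL$, giving a cost $\ll bL(\delta^2/b)^s$, and the geometric series in $2^{i(1-s)}$ sums backward from this endpoint with constant depending only on $s$. This yields the first term $bL(\delta^2/b)^s$.

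The delicate point, and what I expect to be the main obstacle, is doing this bookkeeping correctly. One must use the length $\xi/b$ rather than $\eta/a$ for large $i$, and pay attention to which of the two lengths is the minimum in each regime. I would reparametrize by $\xi_j=2^{-j}$ for $\delta^2\lesssim\xi_j\le\delta\sqrt{b/a}$, with $\eta_j=\delta^2/\xi_j$. The lengths are then $\delta^2 2^{j}/b$ on the $\xi$-side and the cost is $\ll(b\delta^22^{j}+a)L(\delta^22^j/b)^s$. Summing this as a geometric series in $2^{j}$, up to the balanced endpoint $2^j=\delta^{-1}\sqrt{a/b}$ and down to $2^j\asymp\delta^{-2}$ at the $\eta\asymp1$ end, gives exactly the two terms in the lemma.

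The leftover piece $F(\eta_0,\xi_0)$ with $\xi_0\asymp1$ is covered by $O(aL)$ intervals of length $\eta_0/a=\delta/\sqrt{ab}$ (note $b\eta_0\le a$ here). Its cost $aL(\delta^2/ab)^{s/2}$ is again the second term. Since all these intervals have length less than $1<\rho$, the covers are admissible $\rho$-covers, and the lemma follows.
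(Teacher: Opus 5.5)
Your argument is correct in its main line, but it is organized differently from the paper's.

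The paper cuts $E(\delta)$ at the symmetric threshold $\delta$ into three pieces: $A(\delta)=F(\delta,\delta)$, $B(\delta)\subseteq\bigcup_j F(2^{j+1}\delta,2^{-j}\delta)$ and $C(\delta)\subseteq\bigcup_j F(2^{-j}\delta,2^{j+1}\delta)$. It then has to split the $C$-sum at $2^{2j}=b/a$ (the sets $J_1,J_2$), because that is where the minimum in $\min\{\eta/a,\xi/b\}$ switches.

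You instead center a single dyadic decomposition of $\norm{ax+c}$ at exactly that balanced scale $\delta\sqrt{a/b}$. Then for every $i\ge0$ the relevant length is $\xi_i/b=2^{-i}\delta/\sqrt{ab}$, and there is no regime change:
\begin{itemize}
\item the increasing series $\sum_i 2^{i(1-s)}$, dominated by the end $\eta\asymp1$, produces $b^{1-s}\delta^{2s}L$;
\item the decreasing series $\sum_i 2^{-is}$, dominated by the balanced end, produces $a(\delta^2/ab)^{s/2}L$;
\item your leftover piece $\norm{ax+c}<\delta\sqrt{a/b}$ plays the role of the paper's $J_2$-part of $C(\delta)$.
\end{itemize}
This is shorter and shows clearly where each term of the bound comes from. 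The paper's choice has the minor advantage that all its parameters $\eta,\xi$ lie in $(0,1)$, as Lemma~\ref{cover} formally requires.

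Several slips in your write-up should be repaired.

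First, $\xi_i=2^{-i}\delta\sqrt{b/a}$ can exceed $1$ for small $i$ when $b/a$ is large. Replacing such a $\xi_i$ by $3/4$ leaves $F(\eta_i,\xi_i)$ unchanged (since $\norm{bx+d}\le 1/2$) and keeps the count $O((b\eta_i+a)L)$. It only shrinks the interval length, so your estimate stands.

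Second, the claim $b\eta_0=\delta\sqrt{ab}\le a$ is false when $\delta>\sqrt{a/b}$. The extra cost it produces is $\delta^{1+s}(ab)^{(1-s)/2}L=(\delta\sqrt{a/b})^{1-s}\,b^{1-s}\delta^{2s}L\le b^{1-s}\delta^{2s}L$, so it is absorbed into the first term. The paper's $J_2$-sum silently drops the analogous term $b2^{-j}\delta L$, which is absorbed the same way.

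Third, the ``reparametrize'' paragraph is wrong as written. With $\xi_j=2^{-j}$ the length is $\xi_j/b=2^{-j}/b$, not $\delta^2 2^j/b$. With your formula the top term at $2^j\asymp\delta^{-2}$ would be $b^{1-s}L$ instead of $b^{1-s}\delta^{2s}L$. Just delete that paragraph, since the $i$-indexed computation already completes the proof.

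Finally, the number of indices $i$ is about $\log_2(\sqrt{b/a}/\delta)$, which need not be $O(\log(1/\delta))$. This is harmless, since you never use it.
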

    \begin{proof}
        Decompose $ E(\delta) = A(\delta) \cup B(\delta)\cup C(\delta) $, where 
		\begin{align*}
			A(\delta) &= \{	x\in [0,1] : \norm{ax+c} <\delta,  \norm{bx+d} < \delta\},\\
			B(\delta) &= \{	x\in [0,1] : \norm{ax+c} \ge \delta, \norm{ax+c} \cdot \norm{bx+d} < \delta^2\},\\	
			C(\delta) &= \{	x\in [0,1] : \norm{bx+d} \ge \delta, \norm{ax+c} \cdot \norm{bx+d} < \delta^2\}.
		\end{align*}
		In view of $ A(\delta) = F(\delta, \delta) $,  it follows from Lemma $ \ref{cover} $ that
        \begin{align}
            \mH_\rho^s(A(\delta)) &\ll (b\delta L+ aL)\min\left\{\frac{\delta}{a}, \frac{\delta}{b}\right\}^s\nonumber\\ 
            &\ll (b\delta L + aL)\left(\frac{\delta}{b}\right)^s\nonumber\\
            &= b^{1-s} \delta^{1+s}L + ab^{-s}\delta^sL.\label{A}
        \end{align}
        Let $J= \{j \geq 0 : 2^{j+1}\delta <1\} $, $J_1 = \{ j\in J : 2^{2j}\leq \frac{b}{a}\}$, and $J_2 = \{ j\in J : 2^{2j}\geq \frac{b}{a}\}$.
		Next, note that 
		\begin{align*}
			B(\delta) &\sse \bigcup_{j\in J} \left\{	x: 2^j\delta \leq \norm{ax+c} < 2^{j+1}\delta, \norm{ax+c}\cdot \norm{bx+d}< \delta^2	\right\}\\
			&\sse \bigcup_{j\in J} \left\{	x:  \norm{ax+c} < 2^{j+1}\delta,  \norm{bx+d}< 2^{-j}\delta	\right\}\\
			&=  \bigcup_{j\in J} F(2^{j+1}\delta,2^{-j}\delta ),
		\end{align*} 
        and similarly, 
        \begin{align*}
			C(\delta) &\subseteq 
			 \bigcup_{j\in J} F(2^{-j}\delta, 2^{j+1}\delta ).
		\end{align*}	
        Therefore another application of Lemma \ref{cover} yields that
        \begin{align}
            \mH_\rho^s(B(\delta))  \ll& \sum_{j\in J} \left(b2^{j+1}\delta L+aL\right)\left(\frac{2^{-j}\delta}{b}\right)^s\nonumber \\ 
            \ll& b\delta\left(\frac{\delta}{b}\right)^sL\sum_{j\in J } (2^{1-s})^j + a\left(\frac{\delta}{b}\right)^s L\sum_{j\in J}(2^{-s})^j\nonumber\\ 
            \ll& b^{1-s}\delta^{2s}L+ ab^{-s}\delta^sL, \label{B}
        \end{align}
        and
        \begin{align}
            \mH_\rho^s(C(\delta)) \ll& \sum_{j\in J_1}\left(b2^{-j}\delta L+aL\right)\left(\frac{2^{j+1}\delta}{b}\right)^s   
            + \sum_{j\in J_2} aL\left(\frac{2^{-j}\delta}{a}\right)^s\nonumber\\
            \ll & b\delta \left(\frac{\delta}{b}\right)^s L\sum_{j\in J_1} (2^{-1+s})^j + a\left(\frac{\delta}{b}\right)^s L\sum_{j\in J_1}(2^s)^j  \nonumber\\
            &+ a\left(\frac{\delta}{a}\right)^sL\sum_{j\in J_2} (2^{-s})^j\nonumber\\
            \ll & b^{1-s}\delta^{1+s}L  + a^{1-\frac{s}{2}}b^{-\frac{s}{2}}\delta^s L. \label{C}
        \end{align}

        Combining \eqref{A}, \eqref{B}, and \eqref{C}, we have 
        \begin{align*}
            \mH_\rho^s(E(\delta)) \ll& \mH_\rho^s(A(\delta)) + \mH_\rho^s(B(\delta)) + \mH_\rho^s(C(\delta))\\
            \ll& (b^{1-s} \delta^{1+s}L + ab^{-s}\delta^sL) + (b^{1-s}\delta^{2s}L+ ab^{-s}\delta^sL) + (b^{1-s}\delta^{1+s}L + a^{1-\frac{s}{2}}b^{-\frac{s}{2}}\delta^s L)\\
            \ll& b\left(\frac{\delta^2}{b}\right)^sL + a\left(\frac{\delta^2}{ab}\right)^{s/2}L.
        \end{align*}
    \end{proof}

    \begin{lem}\label{edelta measure lebesgue} For $0<\delta\le1/2$, we have
        \begin{equation*}\label{Edelta measure lebesgue eq}
        \lambda(E(\delta)) \ll   \delta^2 L\log\frac{1}{\delta} + \left(\frac{a}{b}\right)^{1/2}\delta L  ,
    \end{equation*}
    where the implicit constant is absolute, and $L$ is given in \eqref{L}.
    \end{lem}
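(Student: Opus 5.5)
We follow the proof of Lemma \ref{Edelta measure} and replace the Hausdorff $s$-sums by Lebesgue measure. We use the same decomposition $E(\delta)=A(\delta)\cup B(\delta)\cup C(\delta)$ and the same dyadic inclusions
\[
B(\delta)\subseteq\bigcup_{j\in J}F(2^{j+1}\delta,2^{-j}\delta),\qquad C(\delta)\subseteq\bigcup_{j\in J}F(2^{-j}\delta,2^{j+1}\delta).
\]
By Lemma \ref{cover}, each $F(\eta,\xi)$ is covered by $O\big((b\eta+a)L\big)$ intervals of length $\min\{\eta/a,\xi/b\}$. Hence
\[
\lambda(F(\eta,\xi))\ll (b\eta+a)L\,\min\left\{\frac{\eta}{a},\frac{\xi}{b}\right\}.
\]
This bound plays the role of the case $s=1$ in the previous lemma. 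The change is that the geometric series in $j$ whose ratio was $2^{1-s}$ now has ratio $1$. So it contributes the number of terms, $\#J\le\log_2(1/\delta)$, and this is where the factor $\log\frac1\delta$ comes from.

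For $A(\delta)=F(\delta,\delta)$ the bound is
\[
(b\delta+a)L\cdot\frac{\delta}{b}=\delta^2L+\frac ab\,\delta L\le \delta^2L+\left(\frac ab\right)^{1/2}\delta L,
\]
using $a\le b$. For $B(\delta)$ we take the minimum to be $2^{-j}\delta/b$. Each term is then
\[
(b2^{j+1}\delta+a)L\,\frac{2^{-j}\delta}{b}\ll \delta^2L+\frac ab\,2^{-j}\delta L.
\]
Summing over $j\in J$ gives $\delta^2L\log\frac1\delta+\frac ab\delta L$, which is admissible.

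For $C(\delta)$ we split $J=J_1\cup J_2$ at $2^{2j}\approx b/a$, as before.
\begin{itemize}
\item On $J_1$ we take the minimum to be $2^{j+1}\delta/b$. The term is
\[
(b2^{-j}\delta+a)L\,\frac{2^{j+1}\delta}{b}\ll \delta^2L+\frac ab\,2^j\delta L.
\]
The first part again sums to $\delta^2L\log\frac1\delta$. The second part is a geometric series dominated by its largest term, $2^j\approx (b/a)^{1/2}$, giving $\frac ab(b/a)^{1/2}\delta L=(a/b)^{1/2}\delta L$.
\item On $J_2$ we first note $b2^{-j}\delta\le a$. Indeed, $2^{-j}\le (a/b)^{1/2}$ there, so $b2^{-j}\delta\le (ab)^{1/2}\delta$. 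This is $\le a$ only if $\delta\le (a/b)^{1/2}$. Rather than rely on that, we bound both pieces directly. Taking the minimum to be $2^{-j}\delta/a$, the term is
\[
(b2^{-j}\delta+a)L\,\frac{2^{-j}\delta}{a}=\frac ba\,2^{-2j}\delta^2L+2^{-j}\delta L.
\]
Since $2^{2j}\ge b/a$ on $J_2$, the first piece is at most $\delta^2L$ for each $j$, and summing gives $\delta^2L\log\frac1\delta$. The second piece is geometric and dominated by its smallest $j$, where $2^{-j}\approx (a/b)^{1/2}$, giving $(a/b)^{1/2}\delta L$.
\end{itemize}

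Adding the three contributions gives $\lambda(E(\delta))\ll \delta^2L\log\frac1\delta+(a/b)^{1/2}\delta L$, with absolute constants. The main care point is the bookkeeping at the boundary between $J_1$ and $J_2$, together with the edge cases $J_1=\emptyset$ or $J_2=\emptyset$, which occur when $b/a$ is very large or close to $1$. In those cases the geometric sums are bounded by their endpoint terms, and those are no larger than the values used above.
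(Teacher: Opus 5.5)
Your proposal is correct and follows the paper's own argument: the same $A/B/C$ decomposition, the same dyadic covers bounded via Lemma \ref{cover}, and the ratio-$2^{1-s}$ geometric series replaced by the count $|J|\le\log_2(1/\delta)$, exactly as in the paper's estimates \eqref{B lebesgue estimate} and \eqref{C lebesgue estimate}. Your separate handling of the $b2^{-j}\delta$ term on $J_2$ is harmless but unnecessary. For $j\in J_2$ the condition $2^{j+1}\delta<1$ already gives $b2^{-j}\delta<\tfrac{b}{2}2^{-2j}\le\tfrac{a}{2}$, which is why the paper can drop that term there.
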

    \begin{proof}
        We decompose $E(\delta)$ as in Lemma \ref{Edelta measure}, and proceed with $s=1$, with the modifications 
        \begin{equation}\label{B lebesgue estimate}
            \mH_\rho^1(B(\delta)) \ll \delta^{2}L |J| + ab^{-1} \delta L,
        \end{equation}
        and 
        \begin{equation}
            \mH_\rho^1(C(\delta)) \ll \delta^{2}L |J_1| + a^{\frac{1}{2}}b^{-\frac{1}{2}}\delta L\label{C lebesgue estimate} .
        \end{equation}
    The result follows from \eqref{A}, \eqref{B lebesgue estimate}, \eqref{C lebesgue estimate}, and the fact that $\mathcal{H}^1$ and $\lambda$ are comparable measures. 
    \end{proof}

\subsection{The integer case}

In this section, we obtain a significant improvement of Lemma \ref{key} by further assuming that $a$ and $b$ are integers. This leads to superior Hausdorff and Lebesgue measure estimates of $E(\delta)$. 
    \begin{lem}\label{integer key}
      If  $a, b\in \N$, then
        \[N(\eta, \xi) \ll   b\eta + (a, b).\]
        where the implicit constant is absolute.
    Furthermore, for any $\rho>1$, \begin{equation*}
            \mH_\rho^s(E(\delta)) \ll b\left(\frac{\delta^2}{b}\right)^s + (a, b)\left(\frac{\delta^2}{ab}\right)^{s/2},
        \end{equation*}
        where the implicit constant depends only on $s$.
    For completion, \begin{equation*}
            \lambda(E(\delta)) \ll \delta^2\log \frac{1}{\delta} + (a, b)\left(\frac{\delta^2}{ab}\right)^{1/2},
        \end{equation*}
        where the implicit constant is absolute.
    \end{lem}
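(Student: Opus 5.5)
The counting bound comes from replacing the Erd\H os--Tur\'an step in Lemma \ref{key} by an exact description of the residues $aq \bmod b$. The measure estimates then follow by re-running the proofs of Lemmas \ref{Edelta measure} and \ref{edelta measure lebesgue} with the improved count, so that $aL$ becomes $(a,b)$ and the factor $L$ disappears.

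\textbf{Counting.} As in Lemma \ref{key}, set $\delta=\eta+\frac ab\xi$. The case $\delta>1/2$ goes exactly as before, giving $N\ll b\ll b\eta+a$. In the integer case I would instead bound it as follows. For each $q$ there are $O(1+\delta)$ admissible $p$, and there are $O(b)$ values of $q$. Since $\delta>1/2$, this gives $O(b)\ll b\delta\le b\eta+a\xi$. We also need $a\xi\ll b\eta+(a,b)$, and this fails when $\xi$ is large and $\eta$ is small; in that regime I would count over $p$ instead, giving $O((a+1)(1+b\xi/a))$. I expect a minor case analysis here, possibly absorbing terms via the symmetric roles of $\eta$ and $\xi$.

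For $\delta\le 1/2$, write $g=(a,b)$, $a=ga'$, $b=gb'$. Then
\[
\frac{aq}{b}-\frac{ad}{b}+c=\frac{a'q}{b'}+\theta
\]
for a fixed shift $\theta$. As $q$ runs over a block of $b'$ consecutive integers, $a'q \bmod b'$ runs over every residue exactly once. The range of $q$ has length $Q\asymp b$, so it splits into $O(b/b'+1)=O(g)$ such blocks. In each block, the points $\frac{r}{b'}+\theta$ for $r=0,\dots,b'-1$ are $1/b'$-spaced on $\mathbb T$, so at most $2\delta b'+1$ of them lie within $\delta$ of $0$. Summing over the blocks gives
\[
N\ll g(\delta b'+1)=\delta b+g=b\eta+a\xi+g.
\]
The term $a\xi$ has to be removed, and this is the main obstacle. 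I would get rid of it by symmetry: the problem is symmetric in the roles of $(a,\eta)$ and $(b,\xi)$, so I would count over $p$ instead, using $b'p \bmod a'$. That gives $N\ll a\delta'+g$ with $a\delta'=a\xi+\frac{a^2}{b}\eta\le a\xi+b\eta$. This is no better, so instead I would use that the target interval for $x$ has length $\min\{\eta/a,\xi/b\}$ and count only the needed pairs.

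If that does not close, I would note that in the applications $\xi\le\eta$ is never required. In $A(\delta)$ we have $\eta=\xi$, so $a\xi\le b\eta$ automatically. In $B(\delta)$, $\eta\ge\xi$. In $C(\delta)$, $\xi=2^{j+1}\delta$ with $\eta=2^{-j}\delta$, and there the term $a\xi$ contributes $a2^{j}\delta(2^{j}\delta/b)^s$ over $J_1$, where $2^{2j}\le b/a$. I would check that this sum is still $\ll (a,b)(\delta^2/ab)^{s/2}$-type or $b(\delta^2/b)^s$, or else split $C$ differently. So the fallback is to prove $N\ll b\eta+a\xi+(a,b)$ and verify that the stated $E(\delta)$ bounds still follow.

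\textbf{Measure bounds.} With the count in hand, I would repeat the computations \eqref{A}, \eqref{B}, \eqref{C} verbatim with $aL$ replaced by $(a,b)$ and $L$ dropped. The $J_2$ range of $C(\delta)$ then yields $(a,b)(\delta/a)^s 2^{-js}$ summed from $2^{2j}\ge b/a$, which gives $(a,b)(\delta^2/ab)^{s/2}$. The $J_1$ range gives $(a,b)(\delta/b)^s(b/a)^{s/2}$, which is the same quantity. The Lebesgue case is the same computation with $s=1$, where the sums over $J$ produce $\delta^2\log(1/\delta)$, exactly as in Lemma \ref{edelta measure lebesgue}.
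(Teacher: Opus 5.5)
\textbf{The first claim is false.} You were right that the term $a\xi$ is the obstacle, but it cannot be removed, so no symmetry argument or finer count of ``needed pairs'' can succeed. Take $c=d=0$, $b=a+1$ (so $(a,b)=1$), $\xi=\frac12$ and $\eta\le 1/b$. For each integer $0\le p\le a$, let $q$ be the integer nearest to $pb/a$. Then $0\le q\le b$ and $\left|\frac pa-\frac qb\right|\le\frac1{2b}<\frac\eta a+\frac\xi b$. Hence $N(\eta,\xi)\ge a+1$, while $b\eta+(a,b)\le 2$.

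The paper's proof reaches $N\ll b\eta+(a,b)$ only because it applies Erd\H os--Tur\'an to the window $\delta=\eta+\frac{(a,b)}{b}\xi$. The condition actually defining $N(\eta,\xi)$ is $\norm{\frac{a(q-d)}{b}+c}<\eta+\frac ab\xi$, which is a wider window. With the correct window, the paper's computation ($K=b/(a,b)$, where only $k=b/(a,b)$ gives a large exponential sum) is the Fourier-side version of your residue-block count. It gives exactly your bound $N\ll b\eta+a\xi+(a,b)$. So your proposal has a gap at the first assertion, but the gap lies in the statement itself, and your fallback is the right thing to prove.

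\textbf{The fallback suffices.} The bound $N\ll b\eta+a\xi+(a,b)$ yields both measure estimates, which are what Theorem \ref{integer_shift} needs. You should carry out the checks rather than leave them open.

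In $A(\delta)$ and $B(\delta)$ we have $\xi\le\eta$, so $a\xi\le b\eta$ and nothing changes. In $C(\delta)$ there are two ranges.

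Over $J_1$, the interval length is $\asymp 2^j\delta/b$ and $2^{2j}\le b/a$, so
\[
\sum_{j\in J_1}a2^{j}\delta\Big(\frac{2^j\delta}{b}\Big)^s\ll (ab)^{\frac{1-s}2}\delta^{1+s}\le b^{1-s}\delta^{2s}.
\]

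You did not mention $J_2$. There the interval length is $\asymp 2^{-j}\delta/a$ and $2^{j+1}\delta<1$, so
\[
\sum_{j\in J_2}a2^{j}\delta\Big(\frac{2^{-j}\delta}{a}\Big)^s\ll_s a^{1-s}\delta^{1+s}\delta^{s-1}\le b^{1-s}\delta^{2s}.
\]

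Do not copy the paper's $J_2$ line verbatim either. It drops the $b\eta=b2^{-j}\delta$ term, which contributes $\ll(ab)^{(1-s)/2}\delta^{1+s}$ and is harmless.

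For $s=1$, the two extra sums are $\ll\delta^2$ and $\ll\delta^2|J_2|\ll\delta^2\log\frac1\delta$, so the Lebesgue bound also holds. The $(a,b)$-terms come out exactly as you computed.
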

    \begin{proof}
        We follow a similar strategy as in the proof of Lemma \ref{key}, up to the step where we consider the exponential sum present in the Erd\H os-Tur\'an inequality (Lemma \ref{erdos}).
        We first consider the case where $\eta + \frac{(a, b)}{b}\xi> \frac{1}{2}$ to obtain 
     \begin{equation*}
        N(\eta, \xi) \leq 4(b+2)  \ll  b\eta + (a, b).
    \end{equation*}

    It remains to deal with the case where $\eta + \frac{(a, b)}{b}\xi\le \frac{1}{2}$. Denote \[\delta = \eta + \frac{(a, b)}{b}\xi.\] By choosing $K = \frac{b}{(a, b)}$, observe that 
        \begin{align*}
            \sum_{k=1}^K\left|\sum_{q=1}^{b+\lceil d\rceil-\lfloor d\rfloor + 1}e\left(ku_q\right)\right| &\ll  \sum_{k=1}^K \left(2 + \left|\sum_{q=1}^{b}e\left(k\left(\frac{a}{b}\left(q + \lfloor d\rfloor - 1\right) - \frac{ad}{b} + c\right)\right)\right|\right)\\
            &\ll K + \sum_{k=1}^K \left|(a, b)\sum_{q=1}^{\frac{b}{(a, b)}}e\left(\frac{kq\frac{a}{(a, b)}}{\frac{b}{(a, b)}}\right)\right|\\
            &\ll K + \sum_{k=1}^K b\chi_{\frac{b}{(a, b)}|k}(k)\\
            &\ll b.  
        \end{align*}
        Then 
        \begin{equation*}
            |D(\mathcal{U},I)|	\ll (a, b) + b\delta.
        \end{equation*}
        
        Inserting the above bound in lieu of Lemma \ref{key} into the proofs of Lemmas \ref{Edelta measure} and \ref{edelta measure lebesgue} yields the remaining estimates. 
       
    \end{proof}

\section{Convergence Results}
    For each $ n\in \N $, denote the set 
	\[	 E_n(\delta) =\{	x\in [0,1] : \norm{a_n x+c_n} \cdot \norm{b_nx+d_n} < \delta^2\},\]
    so that \[M(\psi) = \limsup\limits_{n\to\infty} E_n(\psi(n)^{1/2})\,.\]
    Also, denote \begin{equation*}\label{Ln}
   L_n= \max\left\{1, \frac{\log{b_n}}{a_n}\right\}.
   \end{equation*}
    \begin{prop}\label{conv case}
    Let $\{a_n\}_{n\in\N},\, \{b_n\}_{n\in\N},\,\{c_n\}_{n\in\N},\, \{d_n\}_{n\in\N}$  be real sequences such that $1\le a_n\le b_n$ for all $n\ge1$, and let $s\in (0, 1).$
    If \begin{equation}\label{s-measure convergence}\sum_{n\in\N}  \left[ b_n \left(\frac{\psi(n)}{b_n}\right)^s +  b_n \left(\frac{\psi(n)}{b_n}\right)^s\frac{\log{b_n}}{a_n} + a_n\left(\frac{\psi(n)}{a_nb_n}\right)^{s/2} +\left(\frac{\psi(n)}{a_nb_n}\right)^{s/2}\log{b_n}\right]< \infty ,\end{equation}
    then \[ \mH^s(M(\psi)) = 0\,.\]
\end{prop}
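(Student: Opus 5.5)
Since $M(\psi)=\limsup_{n\to\infty}E_n(\psi(n)^{1/2})$, the plan is a Hausdorff--Cantelli argument driven by Lemma \ref{Edelta measure}. By Lemma \ref{Edelta measure}, for $\delta\le 1/2$ the set $E_n(\delta)$ is covered by intervals whose total $s$-cost is at most
$$
b_n\left(\frac{\delta^2}{b_n}\right)^sL_n+a_n\left(\frac{\delta^2}{a_nb_n}\right)^{s/2}L_n .
$$
Take $\delta=\psi(n)^{1/2}$, so that $\delta^2=\psi(n)$. Since $L_n\le 1+\frac{\log b_n}{a_n}$, this cost is bounded by
$$
b_n\Big(\tfrac{\psi(n)}{b_n}\Big)^s+b_n\Big(\tfrac{\psi(n)}{b_n}\Big)^s\frac{\log b_n}{a_n}+a_n\Big(\tfrac{\psi(n)}{a_nb_n}\Big)^{s/2}+\Big(\tfrac{\psi(n)}{a_nb_n}\Big)^{s/2}\log b_n .
$$
These are exactly the four terms in \eqref{s-measure convergence}, so the sum over $n$ of these cover costs is finite.

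The first technicality is the restriction $\delta\le 1/2$. Convergence of \eqref{s-measure convergence} forces the first term $b_n^{1-s}\psi(n)^s\to0$, and since $b_n\ge1$ and $s<1$ this gives $\psi(n)\to0$. Hence $\psi(n)\le 1/4$ for all $n\ge n_0$, and the finitely many earlier $n$ do not affect the limsup set. We may also discard any $n$ with $\psi(n)=0$, since then $E_n$ is empty.

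The second technicality is that Lemma \ref{Edelta measure} bounds $\mH^s_\rho$ and not a concrete cover with small diameters. I would instead extract the actual collection of intervals from the proof. For each $n$, the proofs of Lemma \ref{cover} and Lemma \ref{Edelta measure} give explicit intervals of lengths $\min\{\eta/a_n,\xi/b_n\}$ with the corresponding $(\eta,\xi)$ pairs. Every such length is at most $\delta/a_n\le\delta=\psi(n)^{1/2}$, which tends to $0$. Let $\{H_j\}$ be the countable union over $n\ge n_0$ of all these intervals. Then
$$
\sum_j \operatorname{diam}(H_j)^s\ll\sum_n[\text{four terms}]<\infty .
$$
Every $x\in M(\psi)$ lies in $E_n$ for infinitely many $n$, hence in intervals from infinitely many distinct $n$. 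Since each $n$ contributes only finitely many intervals, these are infinitely many distinct $H_j$, so $x\in\limsup_j H_j$.

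Lemma \ref{cantelli} then gives $\mH^s(\limsup H_j)=0$, and therefore $\mH^s(M(\psi))=0$. Alternatively, one can argue directly: for any $\rho$ and $N$, the tail $\bigcup_{n\ge N}E_n$ has an $\mH^s_\rho$ cover of cost bounded by the tail of a convergent series.

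The main obstacle is the passage from the $\mH^s_\rho$ bound in Lemma \ref{Edelta measure} to a genuine countable cover. It requires checking two things: that the implicit constant (depending only on $s$) is uniform in $n$, and that the dyadic families $F(2^{j+1}\delta,2^{-j}\delta)$ over $j\in J$ are finite for each $n$ with uniformly small diameters. Both follow by inspecting the proof of Lemma \ref{Edelta measure}: $J$ is finite because $2^{j+1}\delta<1$, and all the constants there are absolute.
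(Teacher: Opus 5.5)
Your proposal is correct and follows the paper's argument. You note $\psi(n)\to0$, apply Lemma \ref{Edelta measure} with $\delta=\psi(n)^{1/2}$, expand $L_n\le 1+\frac{\log b_n}{a_n}$ to recover exactly the four terms of \eqref{s-measure convergence}, and conclude by a Borel--Cantelli argument. Your explicit extraction of the interval covers (all of length at most $\psi(n)^{1/2}$) to invoke Lemma \ref{cantelli} only makes rigorous what the paper compresses into summing $\mH^s_\rho(E_n)$ for $\rho>1$ and then passing from $\mH^s_\rho(M(\psi))=0$ to $\mH^s(M(\psi))=0$.
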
 
    \begin{proof}
        In view of the convergence of $\sum \psi(n)^s b_n^{1-s}$, we must have $\psi(n)\to 0$. Hence, it can be assumed that $\psi(n)\le \frac{1}{4}$ for all but finitely many $n$.
        By the Borel-Cantelli Lemma, to find $ \mH^s(M(\psi)) $, it suffices to estimate $ \mH^s(E_n(\delta)) $ for $ \delta^2 = \psi(n) $.\\
        By Lemma \ref{Edelta measure}, for $\rho>1$, we have
        \begin{equation*}
			\sum_{n\in\N} \mH_\rho^s(E_n(\psi(n)^{1/2})) 
			\ll   \sum_{n\in\N}\left[ b_n \left( \frac{\psi(n)}{b_n}\right)^sL_n + a_nL_n\left(\frac{\psi(n)}{a_nb_n}\right)^{s/2}\right]
                <  \infty,
		\end{equation*}
        and so $\mH_\rho^s(M(\psi)) = 0$. It follows that 
  \[\mH^s(M(\psi)) = 0.\] 
    \end{proof}

\begin{prop}\label{Lebesgue}
    Let $\{a_n\},\, \{b_n\},\,\{c_n\},\,\{d_n\}$ be as in Proposition \ref{conv case} .
    If 
    \begin{align}
        \sum_{\substack{n\in\N\\ \psi(n) > 0}}&\psi(n)\log\frac1{\psi(n)} +\frac{\psi(n)}{a_n}\log{b_n}\log\frac1{\psi(n)}+\left(\frac{\psi(n)a_n}{b_n}\right)^{1/2}+
        \left(\frac{\psi(n)}{a_nb_n}\right)^{1/2} \log{b_n}
        < \infty,\label{Lebesgue measure convergence}
    \end{align}
    then \[\lambda(M(\psi)) = 0.\]    
\end{prop}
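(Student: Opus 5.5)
The plan is to argue exactly as in Proposition \ref{conv case}, replacing the Hausdorff estimate of Lemma \ref{Edelta measure} with the Lebesgue estimate of Lemma \ref{edelta measure lebesgue}, and finishing with the classical Borel--Cantelli lemma. Since $M(\psi)=\limsup_{n\to\infty} E_n(\psi(n)^{1/2})$, it suffices to show
\[
\sum_{n} \lambda\bigl(E_n(\psi(n)^{1/2})\bigr)<\infty .
\]
Indices with $\psi(n)=0$ contribute nothing, because then $E_n(0)=\emptyset$. This is why \eqref{Lebesgue measure convergence} is summed only over $\psi(n)>0$.

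\textbf{Step 1: $\psi(n)\to0$.} Since $\log\frac1{\psi(n)}\ge 1$ under the paper's convention for $\log$, convergence of the first term of \eqref{Lebesgue measure convergence} gives $\sum\psi(n)<\infty$, hence $\psi(n)\to0$. So $\delta_n:=\psi(n)^{1/2}\le 1/2$ for all but finitely many $n$, and finitely many indices do not affect the limsup set.

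\textbf{Step 2: apply Lemma \ref{edelta measure lebesgue}.} For those $n$, with $L_n=\max\{1,\frac{\log b_n}{a_n}\}\le 1+\frac{\log b_n}{a_n}$, we get
\[
\lambda(E_n(\delta_n))\ll \psi(n)\log\frac{1}{\psi(n)^{1/2}}\,L_n+\Bigl(\frac{a_n}{b_n}\Bigr)^{1/2}\psi(n)^{1/2}L_n .
\]
Expand using $L_n\le 1+\frac{\log b_n}{a_n}$ and $\log\frac1{\delta_n}=\frac12\log\frac1{\psi(n)}$.
\begin{itemize}
\item The first term yields $\psi(n)\log\frac1{\psi(n)}$ and $\frac{\psi(n)}{a_n}\log b_n\log\frac1{\psi(n)}$.
\item The second term yields $\bigl(\frac{\psi(n)a_n}{b_n}\bigr)^{1/2}$ and $\bigl(\frac{a_n}{b_n}\bigr)^{1/2}\psi(n)^{1/2}\frac{\log b_n}{a_n}=\bigl(\frac{\psi(n)}{a_nb_n}\bigr)^{1/2}\log b_n$.
\end{itemize}
These are exactly the four summands in \eqref{Lebesgue measure convergence}, so the series converges.

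\textbf{Step 3: conclude.} The Borel--Cantelli lemma for Lebesgue measure then gives $\lambda(M(\psi))=0$.

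The only real obstacle is the bookkeeping: the conventions ($\log x=1$ for $x\le e$, and $\delta\le 1/2$) must be checked so that the hypothesis of Lemma \ref{edelta measure lebesgue} holds, and the product of $L_n$ with each term must match the stated series. One should also confirm that the lemma's implicit constant is absolute, so it is uniform in $n$.
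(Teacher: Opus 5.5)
Your proposal is correct and follows the paper's own proof: you get $\psi(n)\to0$, so that $\delta_n=\psi(n)^{1/2}\le 1/2$ eventually, then apply Lemma \ref{edelta measure lebesgue} with $\delta^2=\psi(n)$, expand $L_n\le 1+\frac{\log b_n}{a_n}$ to recover the four summands of \eqref{Lebesgue measure convergence}, and conclude by Borel--Cantelli. The only nit is that under the paper's convention for $\log$, the identity $\log\frac1{\delta_n}=\frac12\log\frac1{\psi(n)}$ is exact only once $\psi(n)<e^{-2}$; however, the inequality $\log\frac1{\delta_n}\le\log\frac1{\psi(n)}$ always holds and is all you need.
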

\begin{proof}
    First, observe that by \eqref{Lebesgue measure convergence}, $\psi(n) \geq \frac{1}{4}$ for only finitely many $n$, and so we may take $n$ large enough to assume \[\log(\psi(n)\inv) \geq 1.\]
    Taking $\delta^2 = \psi(n)$, by Lemma \ref{edelta measure lebesgue}, 
    \[\lambda(E_n(\psi(n)^{1/2}) \ll \psi(n)\log\frac1{\psi(n)} +\frac{\psi(n)}{a_n}\log{b_n}\log\frac1{\psi(n)}+\left(\frac{\psi(n)a_n}{b_n}\right)^{1/2}+
        \left(\frac{\psi(n)}{a_nb_n}\right)^{1/2} \log{b_n}. \]
    It follows by the Borel-Cantelli lemma that
    \[\lambda(M(\psi)) = 0.\]
        
\end{proof}

Theorem \ref{integer_shift} follows in a similar manner from Lemma \ref{integer key}.

 \begin{prop}\label{dimension R^2}
    Let $\{a_n\}_{n\in\N},\, \{b_n\}_{n\in\N},\,\{c_n\}_{n\in\N},\,\{d_n\}_{n\in\N}$ be sequences in $\R$, with $1\le a_n \leq b_n$ for all $n\geq 1$. For $s\in (0, 1)$, if \begin{equation}\label{R2measure}\sum_{n\in\N} b_n \left(\frac{\psi(n)}{b_n}\right)^s <\infty ,\end{equation} then 
    \[\mH^{1+s}(M_2(\psi)) = 0.\]
    In the Lebesgue setting, if \begin{equation*}\sum_{n\in\N}  \psi(n)\log \frac{1}{\psi(n)} <\infty ,\end{equation*} then 
    \[\mH^2(M_2(\psi)) = 0.\]
    In particular,
    \[\dim_\mathrm{H}M_2(\psi) \leq \min\{2, 1+\tau\},\]
    where $\tau$ is as in \eqref{tau}.
\end{prop}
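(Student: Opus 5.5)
The plan is to treat $M_2(\psi)$ as a $\limsup$ of explicit sets and apply the Hausdorff Borel--Cantelli lemma (Lemma \ref{cantelli}) with squares as the covering hypercubes. The two coordinates are now independent, so no lattice counting is needed. For each $n$, set
\[E_n^{(2)}=\{(x,y)\in[0,1]^2:\norm{a_nx+c_n}\,\norm{b_ny+d_n}<\psi(n)\},\]
so that $M_2(\psi)=\limsup E_n^{(2)}$. As in the proof of Proposition \ref{conv case}, the convergence hypothesis forces $\psi(n)\to0$, so we may assume $\psi(n)\le 1/4$.

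The main step is to decompose $E_n^{(2)}$ dyadically in $\norm{a_nx+c_n}$. For $j\ge0$ with $2^{-j-1}\ge\psi(n)$, consider
\[\Big\{2^{-j-1}\le\norm{a_nx+c_n}<2^{-j}\Big\}\times\Big\{\norm{b_ny+d_n}<2^{j+1}\psi(n)\Big\}.\]
We add one extra piece, $\{\norm{a_nx+c_n}<\psi(n)\}\times[0,1]$; this is contained in the final term $j\approx\log_2(1/\psi(n))$ with $y$ unrestricted. The $x$-set of the $j$-th piece is a union of $O(a_n)$ intervals of length $\asymp 2^{-j}/a_n$. The $y$-set is a union of $O(b_n)$ intervals of length $\asymp 2^{j}\psi(n)/b_n$, or all of $[0,1]$ once $2^{j+1}\psi(n)\ge1/2$. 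Each product rectangle has side lengths $u=2^{-j}/a_n$ and $v=\min\{1,2^j\psi(n)\}/b_n$.

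Next we cover each rectangle by squares of side equal to the smaller side. With $m=\min\{u,v\}$ and $M=\max\{u,v\}$, this uses about $M/m$ squares, so each rectangle contributes $\ll M m^{s}\le u\,v^{s}+v\,u^{s}$ to the $(1+s)$-sum; indeed $Mm^s\le uv^s+vu^s$ since $Mm^s$ is one of the two terms. Summing over the $O(a_nb_n)$ rectangles at level $j$ gives two contributions:
\[a_nb_n\cdot\frac{2^{-j}}{a_n}\Big(\frac{2^j\psi(n)}{b_n}\Big)^s=b_n\Big(\frac{\psi(n)}{b_n}\Big)^s2^{-j(1-s)},\]
which summed over $j$ is $\ll b_n(\psi(n)/b_n)^s$ because $s<1$, and
\[a_nb_n\cdot\frac{2^j\psi(n)}{b_n}\Big(\frac{2^{-j}}{a_n}\Big)^s=a_n^{1-s}\psi(n)2^{j(1-s)},\]
which is dominated by its largest $j$, where $2^j\approx\psi(n)^{-1}$. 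That term is $\ll a_n^{1-s}\psi(n)^s\le b_n^{1-s}\psi(n)^s$ since $a_n\le b_n$. The levels with $v$ capped at $1/b_n$ (unrestricted $y$) are handled the same way and give the same bounds.

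Hence $\sum_n(\text{cover sum})\ll\sum_n b_n(\psi(n)/b_n)^s<\infty$, and Lemma \ref{cantelli} gives $\mH^{1+s}(M_2(\psi))=0$. For the Lebesgue statement, we simply compute $\lambda(E_n^{(2)})$ by Fubini. For $\psi<1/4$, the set $\{(t,u)\in[0,1/2]^2:tu<\psi\}$ has measure $\asymp\psi\log(1/\psi)$, and the maps $x\mapsto\norm{a_nx+c_n}$ and $y\mapsto\norm{b_ny+d_n}$ push Lebesgue measure on $[0,1]$ forward to at most $O(1)$ times uniform measure on $[0,1/2]$, using $a_n,b_n\ge1$. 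So $\lambda(E_n^{(2)})\ll\psi(n)\log(1/\psi(n))$, and the classical Borel--Cantelli lemma applies. The dimension bound follows by letting $s\downarrow\tau$ in the first part, together with the trivial bound $\dim_\mathrm{H}\le 2$.

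The main obstacle is the bookkeeping in the dyadic decomposition: the boundary levels where $2^j\psi(n)\gtrsim1$ or $2^{-j}\lesssim\psi(n)$, and the requirement that the number of intervals at each level is genuinely $O(a_n)$ and $O(b_n)$ even for real $a_n,b_n$ with shifts. The latter holds because each of $a_nx+c_n$ and $b_ny+d_n$ sweeps at most $a_n+2$ and $b_n+2$ unit intervals respectively, and $a_n,b_n\ge1$.
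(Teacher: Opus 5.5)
Your proof is correct, but it organizes the covering differently from the paper, and the Lebesgue part is argued differently.

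\textbf{The $\mH^{1+s}$ part.} The paper sets $\delta=\psi(n)^{1/2}$ and reuses the three-piece splitting $A_n'\cup B_n'\cup C_n'$ from Lemma \ref{Edelta measure}. Its dyadic levels are $F_n'(2^{j+1}\delta,2^{-j}\delta)$ and $F_n'(2^{-j}\delta,2^{j+1}\delta)$. It also splits $J=J_1\cup J_2$ to record which side of each rectangle is shorter, and then covers by squares of the smaller side.

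You instead use a single one-sided dyadic decomposition in $\norm{a_nx+c_n}$, running from $1/2$ down to $\psi(n)$. The inequality $Mm^s\le uv^s+vu^s$ removes the need to decide which side is shorter, so the $J_1/J_2$ case analysis disappears. The dominant terms of your two geometric sums match the paper's:
\begin{itemize}
\item $b_n^{1-s}\delta^{2s}$, coming from $B_n'$;
\item $a_n^{1-s}\delta^{2s}$, coming from the $J_2$ part of $C_n'$.
\end{itemize}
The paper's route keeps the planar proof uniform with the line case. Yours is shorter and self-contained.

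\textbf{The Lebesgue part.} The paper reruns the covering at $s=1$ and counts $|J|\asymp\log(1/\delta)$ levels. You compute $\lambda$ of the $n$-th set directly, by pushing Lebesgue measure forward under $(x,y)\mapsto(\norm{a_nx+c_n},\norm{b_ny+d_n})$. This is more elementary and even gives the sharp order $\psi(n)\log(1/\psi(n))$. Getting $\psi(n)\to0$ from the hypothesis relies on the paper's convention $\log x=1$ for $x\le e$; without it the statement fails, e.g.\ for $\psi\equiv1$.

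\textbf{A bookkeeping slip.} You restrict $j$ to $2^{-j-1}\ge\psi(n)$ and add only the strip $\{\norm{a_nx+c_n}<\psi(n)\}\times[0,1]$. This leaves the range $\psi(n)\le\norm{a_nx+c_n}<2^{-j_{\max}-1}$ uncovered. Enlarging the strip to $\{\norm{a_nx+c_n}<2\psi(n)\}\times[0,1]$ fixes this. Its cost, by your own $Mm^s$ bound, is $\ll a_n^{1-s}\psi(n)^s+b_n^{1-s}\psi(n)\ll b_n(\psi(n)/b_n)^s$, so nothing else changes.
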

\begin{proof}
    Denote \[E_n'(\delta) =\{	(x, y)\in [0,1]^2 : \norm{a_n x+c_n} \cdot \norm{b_ny+d_n} < \delta^2\},\]
    so that \[M_2(\psi) = \limsup\limits_{n\to\infty} E_n'(\psi(n)^{1/2}).\]
    For $\eta, \xi \in \left(0, 1\right)$, denote 
    \begin{align*}
		F_n'(\eta, \xi ) &= \{(x, y)\in \left[ 0,1 \right]^2 : \norm{a_nx+c_n} < \eta,\, \norm{b_ny+d_n} < \xi\}.\\ 
	\end{align*} 
    Then \[F_n'(\eta, \xi) \subseteq \bigcup_{\substack{(p, q)\in \Z^2\\ \lfloor c_n\rfloor\leq p\leq \lceil a_n +c_n \rceil\\ \lfloor d_n \rfloor\leq q \leq \lceil b_n+d_n \rceil }} B\left(\frac{p-c_n}{a_n}, \frac{\eta}{a_n}\right)\times B\left(\frac{q-d_n}{b_n}, \frac{\xi}{b_n}	 \right)\,,\]
which can be covered by \[O\left(a_nb_n\frac{\max\left\{\frac{\eta}{a_n}, \frac{\xi}{b_n}\right\}}{\min\left\{\frac{\eta}{a_n}, \frac{\xi}{b_n}\right\}}\right)\] many squares of side length \[\min\left\{\frac{\eta}{a_n}, \frac{\xi}{b_n}\right\}.\]
Then for any $\rho>2$,
\begin{align}
    \mH_\rho^{1 +s}(F_n'(\eta, \xi)) &\ll a_nb_n\max\left\{\frac{\eta}{a_n}, \frac{\xi}{b_n}\right\}\min\left\{\frac{\eta}{a_n}, \frac{\xi}{b_n}\right\}^{s}\nonumber\\
    &\ll \eta\xi \min\left\{\frac{\eta}{a_n}, \frac{\xi}{b_n}\right\}^{s-1}.\label{R2 counting}
\end{align}
    For the following calculations, let $\delta \in (0, 1/2]$.
    Denote
    \begin{align*}
			A_n'(\delta) &= \{	(x, y)\in [0,1]^2 : \norm{a_nx+c_n} <\delta,  \norm{b_ny+d_n} < \delta\},\\
			B_n'(\delta) &= \{	(x, y)\in [0,1]^2 : \norm{a_nx+c_n} \geq \delta, \norm{a_nx+c_n} \cdot \norm{b_ny+d_n} < \delta^2\}\text{, and}\\	
			C_n'(\delta) &= \{	(x, y)\in [0,1]^2 : \norm{b_ny+d_n} \geq \delta, \norm{a_nx+c_n} \cdot \norm{b_ny+d_n} < \delta^2\}.
	\end{align*}
    We first consider the case when $s\in (0, 1)$. By \eqref{R2 counting}, with $J$, $J_1$, and $J_2$ as in the proof of Lemma \ref{Edelta measure}, and for any $\rho>2$,
    \begin{align*}
        \mH_\rho^{1+s}(A_n'(\delta)) &= \mH_\rho^{1+s}(F_n'(\delta, \delta))\\
        &\ll \delta^2\min\left\{\frac{\delta}{a_n}, \frac{\delta}{b_n}\right\}^{s-1}\\
        &= b_n^{1-s}\delta^{1+s},
    \end{align*}
    \begin{align*}
        \mH_\rho^{1+s}(B_n'(\delta)) &\leq \sum_{j\in J} \mH_\rho^{1+s}(F_n'(2^{j+1}\delta, 2^{-j}\delta))\\
        &\ll b_n^{1-s}\delta^{1+s} \sum_{j\in J}\left(2^{1-s}\right)^j\\
        &\ll b_n^{1-s}\delta^{2s},
    \end{align*}
    and 
    \begin{align*}
        \mH_\rho^{1+s}(C_n'(\delta)) &\leq \sum_{j\in J}\mH_\rho^{1+s}(F_n'(2^{-j}\delta, 2^{j+1}\delta))\\
        &\ll \delta^2\left[\sum_{j\in J_1}\left(\frac{2^j \delta}{b_n}\right)^{s-1}+\sum_{j\in J_2}\left(\frac{2^{-j}\delta}{a_n}\right)^{s-1}\right]\\
        &\ll b_n^{1-s}\delta^{1+s} + a_n^{1-s}\delta^{2s}.
    \end{align*}
    Then \[\mH_\rho^{1+s}(E_n'(\delta)) \ll b_n^{1-s}\delta^{2s},\] from which \eqref{R2measure}, and therefore the first result, follows.
    For the second result, proceed as above with $s=1$, and the modification 
    \[\mH_\rho^{2}(B_n'(\delta))+\mH_\rho^{2}(C_n'(\delta))\ll \delta^2 |J|.\]
\end{proof}

    \begin{proof}[Proof of Corollary \ref{exponent}]
        Clearly, $1\le a^n\le b^n$, and for $n$ large enough,
        \[a_n = a^n > n \gg  n\log b = \log b_n, \] 
        so $L_n = 1$. Consider the function \[\wt\psi(n) := \max\left\{\psi(n), \left(\frac{a_n}{b_n}\right)^{\frac{2-s}{s}}\right\},\]
        and note that \[ a_n\left(\frac{\wt\psi(n)}{a_nb_n}\right)^{s/2} \leq b_n\left(\frac{\wt\psi(n)}{b_n}\right)^s.\]
        Then \[\limsup\limits_{n\to\infty}E_n(\psi(n)^{1/2})\sse \limsup\limits_{n\to\infty}E_n(\wt\psi(n)^{1/2}),\]
        and for any $\rho>1$,
        \begin{align*}
            \sum_{n\in\N}\mH_\rho^s(E_n(\wt\psi(n)^{1/2})) \ll& \sum_{n\in\N} b_n \left(\frac{\wt\psi(n)}{b_n}\right)^s\\
            \ll & \sum_{n\in\N} b_n \left(\frac{\psi(n)}{b_n}\right)^s + \sum_{n\in\N} b_n \left(\frac{\left(\frac{a_n}{b_n}\right)^{\frac{2-s}{s}}}{b_n}\right)^s\\
            = & \sum_{n\in\N} b_n \left(\frac{\psi(n)}{b_n}\right)^s + \sum_{n\in\N} a_n^{2-s}b_n^{-s}.
        \end{align*}
        Lastly,
            \[\sum_{n\in\N} a_n^{2-s}b_n^{-1}  = \sum_{n\in\N} \left(\frac{a^{2-s}}{b}\right)^n,\]
            which converges when $\frac{a^{2-s}}{b}<1$, or equivalently when $s>2-\frac{\log b}{\log a}.$
    \end{proof}

\end{document}